\newtheorem{theorem}{Theorem}
\newtheorem{proposition}[theorem]{Proposition}
\newtheorem{lemma}[theorem]{Lemma}
\newtheorem*{que*}{Question}
\newtheorem*{rem*}{Remark}
\begin{document}
\title{Cheeger type inequalities associated with isocapacitary constants on Riemannian manifolds with boundary}

\author{Bobo Hua and Yang Shen  }

\maketitle
{{\bf Abstract:}
In this paper, we study the Steklov eigenvalue of a Riemannian manifold $(M,g)$ with smooth boundary. For compact $M$, we establish a Cheeger-type inequality for the first Steklov eigenvalue by the isocapacitary constant. For non-compact $M$, we estimate the bottom of the spectrum of the Dirichlet-to-Neumann operator by the isocapacitary constant.
}
 \tableofcontents
\section{Introduction}
For an $n$-dimensional compact orientable Riemannian manifold $(M,g)$ with smooth boundary $\partial M$, the Steklov problem on $(M,g)$ is defined as
$$\begin{cases}
    \Delta f(x)=0&\text{ for } x\in M;\\
    \frac{\partial f}{\partial \nu}(x)=\sigma f(x)&\text{ for }x\in\partial M.
\end{cases}$$
where $\Delta$ is the Laplace-Beltrami operator on $(M,g)$ and $\frac{\partial}{\partial \nu}$ is the outward normal derivative along $\partial M$. This problem was firstly introduced in the Euclidean space by Vladimir Steklov in 1902, one may refer to \cite{ste-survey} for more details. The Steklov problem on $(M,g)$ coincides with the spectrum of the DtN operator $\mathcal{D}_M$ which is defined as 
\begin{align*}
\mathcal{D}_M:H^{\frac{1}{2}}(\partial M)\to H^{-\frac{1}{2}}(\partial M)
\end{align*}
$$f\mapsto\mathcal{D}_Mf=\frac{\partial H_f}{\partial \nu},$$
where $H_f$ is the unique harmonic extension of $f$ on $M$. It is well-known that $\mathcal{D}_M$ is a self-adjoint and non-negative operator, its spectrum is discrete, and the eigenvalues could be enumerated as 
$$0=\sigma_0(M)<\sigma_1(M)\leq\sigma_2(M)\leq\cdots.$$
Similar to the case of Laplacian eigenvalues, the Weyl's law also holds for Steklov eigenvalues (see e.g \cite{ste-sur-17}), that is  
$$\#\{j;\sigma_j(M)<\sigma\}=\frac{\textnormal{Vol}(\mathbb{B}^{n-1})\textnormal{Vol}(\partial M)}{(2\pi)^{n-1}}\sigma^{n-1}+O(\sigma^{n-2}),\ \sigma\to \infty,$$
 where $\mathbb{B}^{n-1}$ is the Euclidean ball in $\mathbb{R}^{n-1}$. For the problems of determining the domain which maximizes the first Steklov eigenvalue under some constrains, the readers may refer to \cite{Br01,FS11,GP12,We54}.
For the estimates of the first Steklov eigenvalue by geometry quantities, the readers may refer to \cite{CE11,Es97,Es99,Es00}. Colbois-Girouard-Raveendran \cite{CGR18} studied the Steklov problem on some discretizations of manifolds. 
With the help of random hyperbolic surfaces theory, Han-He-Hong \cite{HHH22} studied the behavior of the first Steklov eigenvalues in moduli space $\mathcal{M}_{g,1}(L)$ and obtained a construction of some special hyperbolic surfaces. For the readers who are interested in random surfaces theory, may refer to \cite{Mir13,WX22,WX-PGT,HSWX23,SW23} for more examples. The Steklov eigenvalues of graphs have also been well studied, see e.g. \cite{HM20,HH22,HHW17,Pe19,Pe21,SY22} for more results.

Similar to the Cheeger constant $h_c$ and famous Cheeger's inequality, Jammes \cite{Ja15} introduced an isoperimetric constant $h_j$ for the Steklov eigenvalue and proved the following Cheeger type inequality,
$$\sigma_1(M)\geq\frac{1}{4}h_c(M)h_j(M).$$
According to the result of Buser in \cite{Bu82}, if $\textnormal{Ric}(M)\geq -(n-1)a^2$, then
\begin{align}\label{l-bu}
\lambda_1(M)\leq 2a(n-1)h_c+10h_c^2.
\end{align}
However, similar upper bound does not hold for Steklov eigenvalue, for which one refers to Page $39$ of \cite{ste-sur} for a counter example. It is natural to ask the following question.
\begin{que*}[Open Question 4.6 in \cite{ste-sur}]
    Can one define a different Cheeger-type isoperimetric constant $h^\prime$ for which $\sigma_1$ satisfies a Buser-type inequality as in \eqref{l-bu}?
\end{que*}
In this article, we answer this question partially by the isocapacitary constant.

Assume $\Omega$ is a subdomain of an $n$-dimensional Riemannian manifold $M$. For any $F\subset\Omega$, the capacity $\textnormal{Cap}(F,\Omega)$ of $F$ relative to $\Omega$ is defined by
$$\textnormal{Cap}(F,\Omega):=\inf\left\{\int_{\Omega}|\nabla u|^2dx:u\in C_0^\infty(\Omega),u\geq 1\text{ on }F\right\}.$$
Maz'ya \cite{ma62, ma64, ma05, ma09} estimated eigenvalues of the Laplacian operator by capacity. He proved that the first Dirichlet eigenvalue $\lambda_1(\Omega)$  satisfies the following inequalities
\begin{align}\label{in-mazya}\frac{1}{4}\Gamma(\Omega)\leq\lambda_1(\Omega)\leq\Gamma(\Omega),\end{align}
where 
$$\Gamma(\Omega):=\inf\limits_{\substack{F\subset\Omega\\ \text{compact} }}\frac{\textnormal{Cap}(F,\Omega)}{|F|}.$$
Joint with M$\ddot{\text{u}}$nch and Wang, the first author \cite{HMW24} obtained similar estimates on graphs and applied them to the discrete Steklov problem. 

Inspired by the results of Maz'ya, we will establish the corresponding inequalities between the first Steklov eigenvalue and the isocapacitary constant. 
Assume $(M,g)$ is a compact $n$-dimensional Riemannian manifold with smooth boundary $\partial M$. Let $V_M$ be the volume form induced by $g$ and $d\sigma$ be the $(n-1)$ dimensional Hausdorff measure on $\partial M$. For any measurable subset $U\subset\partial M$, denote by 
$$m(U)=\int_U 1d\sigma.$$
For any compact subsets $A,B\subset M$, define 
$$\textnormal{Cap}(A,B,M):=\inf\left\{\int_{\Omega}|\nabla u|^2dV_M:\begin{matrix}u\in C^\infty(M),u\geq 1\text{ on }A,\\ u\leq 0\text{ on }B\end{matrix}\right\}.$$
Moreover, define
    $$\Gamma_{\partial}( M)=\min\limits_{A,B\subset\partial M}\frac{\textnormal{Cap}(A,B)}{\min\{m(A),m(B)\}},$$
    where $A,B$ are taken over all compact subsets of $\partial M$. Similar to \eqref{in-mazya}, we prove the main result of this paper.
\begin{theorem}\label{mt-1}
    Assume $(M,g)$ is a compact Riemannian manifold with smooth boundary $\partial M$, then 
    $$\frac{1}{4}\Gamma_{\partial}( M)\leq \sigma_1( M)\leq 2\Gamma_{\partial}( M).$$
\end{theorem}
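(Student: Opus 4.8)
The plan is to work throughout with the variational description
$$\sigma_1(M)=\inf\left\{\frac{\int_M|\nabla\phi|^2\,dV_M}{\int_{\partial M}\phi^2\,d\sigma}\ :\ \phi\in H^1(M),\ \int_{\partial M}\phi\,d\sigma=0,\ \phi|_{\partial M}\not\equiv0\right\},$$
which is legitimate because the harmonic extension of a boundary datum minimizes the Dirichlet energy among all extensions and smooth competitors are dense. For the upper bound $\sigma_1(M)\le2\Gamma_\partial(M)$ I would fix compact $A,B\subset\partial M$ with $m(A),m(B)>0$ (null pieces make the defining quotient of $\Gamma_\partial$ equal to $+\infty$ and can be discarded), take an almost optimal competitor $u\in C^\infty(M)$ for $\textnormal{Cap}(A,B)$, and replace it by its truncation $v:=\min\{1,\max\{u,0\}\}$, which still equals $1$ on $A$, equals $0$ on $B$, and has $\int_M|\nabla v|^2\le\int_M|\nabla u|^2$. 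Subtracting the boundary mean $c:=\tfrac1{m(\partial M)}\int_{\partial M}v\,d\sigma\in[0,1]$ produces an admissible test function $v-c$, and since $v\equiv1$ on $A$ and $v\equiv0$ on $B$,
$$\int_{\partial M}(v-c)^2\,d\sigma\ \ge\ m(A)(1-c)^2+m(B)c^2\ \ge\ \frac{m(A)m(B)}{m(A)+m(B)}\ \ge\ \tfrac12\min\{m(A),m(B)\}.$$
Plugging this denominator bound into the Rayleigh quotient and then taking the infimum over $A,B$ gives the upper bound.

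For the lower bound $\sigma_1(M)\ge\tfrac14\Gamma_\partial(M)$ I would follow Maz'ya's capacitary method, transplanted to the boundary. Let $f$ be a first Steklov eigenfunction; replacing $f$ by $-f$ if necessary, assume $m(\{f\ge0\}\cap\partial M)\le m(\{f\le0\}\cap\partial M)$, and set $u:=f_+$, $B:=\{f\le0\}\cap\partial M$, $A_t:=\{f\ge t\}\cap\partial M$ for $t>0$ (so $A_t\cap B=\emptyset$). Testing the weak formulation $\int_M\nabla f\cdot\nabla\phi\,dV_M=\sigma_1(M)\int_{\partial M}f\phi\,d\sigma$ against $\phi=f_+$ gives the exact identity $\int_M|\nabla u|^2\,dV_M=\sigma_1(M)\int_{\partial M}u^2\,d\sigma$, whose right side is positive (otherwise $f|_{\partial M}\le0$ a.e., hence $\equiv0$ by mean-zeroness, hence $f\equiv0$); so it suffices to prove $\int_{\partial M}u^2\le\tfrac4{\Gamma_\partial(M)}\int_M|\nabla u|^2$. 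By the layer-cake formula $\int_{\partial M}u^2\,d\sigma=\int_0^\infty2t\,m(A_t)\,dt$, and by the sign normalization $\min\{m(A_t),m(B)\}=m(A_t)$, so the definition of $\Gamma_\partial(M)$ yields $m(A_t)\le\Gamma_\partial(M)^{-1}\textnormal{Cap}(A_t,B,M)$.

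It then remains to bound the capacity sharply. Here I would use the equilibrium-type competitor adapted to $u$: for nondecreasing $g$ with $g(0)=0$, $g(t)=1$, the function $g(u)$ is admissible for $\textnormal{Cap}(A_t,B,M)$, and the coarea formula gives $\int_M|\nabla g(u)|^2=\int_0^tg'(s)^2J(s)\,ds$ with $J(s):=\int_{\{f=s\}\cap M}|\nabla f|\,d\mathcal H^{n-1}$; Cauchy--Schwarz makes the choice $g'\propto1/J$ optimal, giving $\textnormal{Cap}(A_t,B,M)\le R(t)^{-1}$, $R(t):=\int_0^t\frac{ds}{J(s)}$, while coarea also gives $\int_M|\nabla u|^2=\int_0^\infty J(s)\,ds=\int_0^\infty R'(s)^{-1}\,ds$. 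Combining the three, $\int_{\partial M}u^2\le\Gamma_\partial(M)^{-1}\int_0^\infty\frac{2t}{R(t)}\,dt$, and the change of variables $s=R(t)$, $T:=R^{-1}$, converts the desired inequality $\int_0^\infty\frac{2t}{R(t)}\,dt\le4\int_0^\infty R'(t)^{-1}\,dt$ into exactly the one-dimensional Hardy inequality $\int_0^\infty\big(\tfrac1s\int_0^sT'\big)^2ds\le4\int_0^\infty(T')^2ds$; its sharp constant $4$ closes the argument, since then $\int_{\partial M}u^2\le\tfrac4{\Gamma_\partial(M)}\int_M|\nabla u|^2=\tfrac{4\sigma_1(M)}{\Gamma_\partial(M)}\int_{\partial M}u^2$, i.e. $\sigma_1(M)\ge\tfrac14\Gamma_\partial(M)$.

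The upper bound is elementary, and the conceptual skeleton of the lower bound (reduction to Hardy) is short; I expect the main obstacle to be the measure-theoretic bookkeeping of the lower bound. Since $f_+$ is only Lipschitz, one must invoke Sard's theorem to ensure that a.e.\ level $s$ is regular, so that $J(s)>0$ and $R(t)<\infty$ for a.e.\ $t$ (if $J$ degenerates as $s\downarrow0$, replace $B$ by $\{f\le-\varepsilon\}\cap\partial M$ and let $\varepsilon\downarrow0$), justify the coarea identities and the optimization over $g$ for Lipschitz $u$, and approximate the competitors $g(u)$ by smooth functions to match the definition of $\textnormal{Cap}$. A minor additional point is to check that the infimum defining $\Gamma_\partial(M)$ is attained (or simply to argue with the defining quotient, as in the upper bound) and that $\int_{\partial M}u^2>0$.
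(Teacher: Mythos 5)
Your proposal is correct and follows essentially the same route as the paper: the upper bound uses a near-optimal capacitor function minus its boundary mean with the same denominator estimate, and the lower bound is Maz'ya's capacitary method — level sets of the eigenfunction, the equilibrium competitor $g(u)$ optimized via the coarea formula, and the sharp Hardy inequality with constant $4$ — which is exactly the content of the paper's Proposition \ref{p-cap} that you reprove inline. The only (harmless, even slightly cleaner) deviations are that you test the weak formulation against $f_+$ to get the exact identity $\int_M|\nabla f_+|^2=\sigma_1\int_{\partial M}f_+^2$ and work with boundary level sets directly, whereas the paper uses the full eigenfunction together with monotonicity of capacity.
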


For the case that $M$ is non-compact,  although the spectrum of $\mathcal{D}_M$ may not be discrete, it still lies on positive real axis and 
\begin{align*}
\inf\textnormal{Spec}(\mathcal{D}_{M})=\inf\limits_{f}\frac{\int_{M}|\nabla H_f|^2dV_{ M}}{\int_{\partial M}f^2d\sigma},
\end{align*}
where $f$ is taken over all functions in $C_c^\infty(\partial M)$ and $H_f$ is a harmonic extension of $f$ on $M$, see Subsection \eqref{s-2.2} for definition. 
For any compact subset $F\subset\partial M$, define 
$$\textnormal{Cap}(F, M)=\inf\limits_{f}\int_{ M}|\nabla f|^2dV_{ M},$$
where $f$ is taken over all smooth functions such that 
$$f\equiv 1\text{ on $F$ and }f\in C_c^\infty(M).$$ Also define
$$\Gamma_\partial( M)=\inf\limits_{\substack{F\subset\partial M\\ \text{ compact}}}\frac{\textnormal{Cap}(F, M)}{m(F)}.$$
Then we have the following result.
\begin{theorem}\label{mt-2}
    Assume $(M,g)$ is a non-compact Riemannian manifold with embedded smooth boundary $\partial M$, then 
    $$\frac{1}{4}\Gamma_{\partial}( M)\leq\inf\textnormal{Spec}(\mathcal{D}_{ M})\leq 2\Gamma_{\partial}( M).$$
\end{theorem}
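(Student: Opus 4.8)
The plan is to carry Maz'ya's capacitary argument for the first Dirichlet eigenvalue (see \eqref{in-mazya}) over to the Steklov problem, the essential difference being that the Dirichlet energy is integrated over the bulk $M$ while the $L^2$‑mass lives on $\partial M$. \textbf{Upper bound.} Fix a compact $F\subset\partial M$ with $m(F)>0$ and $\varepsilon>0$, and pick $u\in C_c^\infty(M)$ with $u\equiv 1$ on $F$ and $\int_M|\nabla u|^2\,dV_M\le\textnormal{Cap}(F,M)+\varepsilon$. Truncating and smoothing we may take $0\le u\le 1$, so that $f\colonequals u|_{\partial M}\in C_c^\infty(\partial M)$ satisfies $f\equiv 1$ on $F$ and $0\le f\le 1$, whence $\int_{\partial M}f^2\,d\sigma\ge m(F)$. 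By the Dirichlet principle $\int_M|\nabla H_f|^2\,dV_M\le\int_M|\nabla u|^2\,dV_M$, so
\[
\inf\textnormal{Spec}(\mathcal{D}_M)\le\frac{\int_M|\nabla H_f|^2\,dV_M}{\int_{\partial M}f^2\,d\sigma}\le\frac{\textnormal{Cap}(F,M)+\varepsilon}{m(F)};
\]
letting $\varepsilon\to 0$ and then taking the infimum over $F$ gives $\inf\textnormal{Spec}(\mathcal{D}_M)\le\Gamma_\partial(M)\le 2\Gamma_\partial(M)$ (the factor $2$ is not needed here).

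\textbf{Lower bound.} It suffices to prove $\int_M|\nabla H_f|^2\,dV_M\ge\tfrac14\Gamma_\partial(M)\int_{\partial M}f^2\,d\sigma$ for every $f\in C_c^\infty(\partial M)$, and I may assume $\Gamma_\partial(M)>0$. Write $u\colonequals H_f$ and $v\colonequals|u|$, so $|\nabla v|=|\nabla u|$ a.e., $v=|f|$ on $\partial M$, and, since $u$ is the energy‑minimizing harmonic extension, $v\le\|f\|_\infty\colonequals L$ by comparison with $\min\{u,L\}$ and $\max\{u,-L\}$. For $t\in(0,L]$ the superlevel set $F_t\colonequals\{x\in\partial M:|f(x)|\ge t\}$ is compact, and with $\mu(t)\colonequals m(F_t)$ the layer‑cake formula gives $\int_{\partial M}f^2\,d\sigma=2\int_0^L t\,\mu(t)\,dt$. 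For $0<t<T\le L$ consider the Lipschitz function $\psi_{t,T}\colonequals\min\{1,(v-t)_+/(T-t)\}$: it equals $1$ on $\{v\ge T\}\supseteq F_T$, is supported in $\{v>t\}$, and satisfies $\int_M|\nabla\psi_{t,T}|^2\,dV_M=(T-t)^{-2}\int_{\{t<v<T\}}|\nabla u|^2\,dV_M<\infty$. Granting that $\psi_{t,T}$ (or a suitable approximation of it) is admissible for $\textnormal{Cap}(F_T,M)$ — the point discussed below — and using $\textnormal{Cap}(F_T,M)\ge\Gamma_\partial(M)\,m(F_T)$, one obtains the family of inequalities
\[
\Gamma_\partial(M)\,(T-t)^2\,\mu(T)\le\int_{\{t<v<T\}}|\nabla u|^2\,dV_M\qquad(0<t<T\le L).
\]
Combining these over a suitable family of levels by the capacitary summation in Maz'ya's proof of \eqref{in-mazya} — using that the annular regions $\{t<v<T\}$ attached to disjoint level‑intervals are disjoint and contribute at most $\int_M|\nabla u|^2\,dV_M$ in total — one arrives at $\int_M|\nabla u|^2\,dV_M\ge\tfrac14\Gamma_\partial(M)\int_{\partial M}f^2\,d\sigma$. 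Taking the infimum over $f$ yields $\inf\textnormal{Spec}(\mathcal{D}_M)\ge\tfrac14\Gamma_\partial(M)$.

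\textbf{The hard part.} The delicate point, absent from the compact case of Theorem~\ref{mt-1}, is that $\textnormal{Cap}(F_T,M)$ is defined through \emph{compactly supported} test functions, whereas $\psi_{t,T}$, built from the harmonic function $v=|H_f|$, need not have compact support (indeed if $f\ge 0$ then $H_f>0$ on all of $M^\circ$). I would cut $\psi_{t,T}$ off with a function $\chi_R$ equal to $1$ on $B_R(p)$, supported in $B_{2R}(p)$, with $|\nabla\chi_R|\le 2/R$, so that $\chi_R\psi_{t,T}\in\textnormal{Lip}_c(M)$ still equals $1$ on $F_T$ once $F_T\subset B_R(p)$; the task is then to show that the error $\int_M|\nabla\chi_R|^2\psi_{t,T}^2\,dV_M\le 4R^{-2}\,\textnormal{Vol}\bigl((B_{2R}(p)\setminus B_R(p))\cap\{v>t\}\bigr)$ can be driven to $0$ along a sequence $R\to\infty$, giving $\textnormal{Cap}(F_T,M)\le\int_M|\nabla\psi_{t,T}|^2\,dV_M$ in the limit. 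This is precisely where the standing hypothesis $\Gamma_\partial(M)>0$ must enter: it is a quantitative non‑parabolicity condition on $M$ (an isocapacitary, hence Faber–Krahn type, inequality) under which such truncations are legitimate, while if $\Gamma_\partial(M)=0$ — e.g.\ when $M$ is parabolic — both sides of the theorem vanish and there is nothing to prove. A secondary, purely computational, issue is extracting the \emph{sharp} constant $\tfrac14$ rather than a weaker one coming from a crude dyadic summation; I expect this to be handled by Maz'ya's capacitary (strong type) inequality, transcribed to the present boundary/non‑compact setting.
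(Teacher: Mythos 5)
Your upper bound is correct, and in fact sharper than the paper's: restricting a near-minimizer for $\textnormal{Cap}(F,M)$ to $\partial M$ and invoking \eqref{e-RQ-2} together with the Dirichlet principle (Lemma \ref{l-ccap}) gives $\inf\textnormal{Spec}(\mathcal{D}_M)\leq\Gamma_\partial(M)$ with constant $1$. The lower bound, however, has a genuine gap, and it is exactly at the point you flag. You need $\psi_{t,T}$ (built from $v=|H_f|$) to be admissible for $\textnormal{Cap}(F_T,M)$, which is defined through \emph{compactly supported} test functions, and your proposed fix — cut off by $\chi_R$ and claim the error $4R^{-2}\,\textnormal{Vol}\bigl((B_{2R}(p)\setminus B_R(p))\cap\{v>t\}\bigr)$ tends to $0$ because $\Gamma_\partial(M)>0$ — is not substantiated. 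Positivity of the isocapacitary constant gives no control whatsoever on volume growth of annuli or on the size of the superlevel sets of $H_f$: in the paper's own example $\mathbb{D}^{n+1}_+$ (hyperbolic half-ball), where $\Gamma_\partial>0$, annuli have exponentially growing volume and $\{v>t\}$ is unbounded, so your error bound does not vanish along any sequence $R\to\infty$. Moreover, the final "capacitary summation giving $1/4$" is deferred to Maz'ya's strong capacitary inequality; in the paper that is Proposition \ref{p-cap}, which is stated and proved for a two-set capacity on a \emph{compact} manifold, and transplanting its proof to the grounded capacity $\textnormal{Cap}(\cdot,M)$ on non-compact $M$ runs into the same compact-support difficulty, so this step cannot simply be quoted.

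The paper avoids both problems by never working with the grounded capacity directly: it exhausts $M$ by $M_r=M\cap\mathbb{B}(p,r)$, applies the mixed Steklov--Dirichlet version of the compact theorem (the inequality \eqref{e-ineq} with $X=M_r$, $Y=\partial^I M_r$, where the capacity is relative to $\partial^I M_r$ and no decay at infinity is needed), and then passes to the limit using Lemma \ref{l-lim-1} ($\xi_1(M_r,\partial^I M_r)\to\inf\textnormal{Spec}(\mathcal{D}_M)$) and Lemma \ref{l-lim-2} ($\Gamma_\partial(M_r,M)\to\Gamma_\partial(M)$). If you want to rescue your direct argument, the cleanest route is essentially to reprove these two approximation lemmas: replace $H_f$ by the exhaustion extensions $H_{f_r}$ (which vanish on $\partial^I M_r$, so the level-set test functions are automatically compactly supported after extension by zero) and note that $\textnormal{Cap}(F,\partial^I M_r,M_r)\geq\textnormal{Cap}(F,M)$, which is the monotonicity underlying Lemma \ref{l-lim-2}. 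Without some such device, the step "$\textnormal{Cap}(F_T,M)\leq\int_M|\nabla\psi_{t,T}|^2\,dV_M$" is unproved and the lower bound does not go through.
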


In the last section, we will consider some special Riemannian manifolds. As a direct application of Theorem \ref{mt-1}, we estimate the first Steklov eigenvalues of compact hyperbolic surfaces with geodesic boundaries. For the case of $(n+1)$-dimensional  upper half unit ball $\mathbb{D}^{n+1}_+$ in the hyperbolic space $\mathbb{D}^{n+1}\ (n\geq 1)$, which are non-compact Riemannian manifolds with smooth embedded boundary, we give an exact expression of corresponding Dirichlet-to-Nuemann operator and estimate the bottom of its spectrum.
\section{Preliminaries}

\subsection{Mixed boundary condition}
Assume $(X,g)$ is a compact Riemannian manifold with smooth boundary $\partial X$ and $Y$ is a compact subset of $\partial X$. Now we consider $H^{\frac{1}{2}}_Y(\partial X)$, which is a subspace of $H^{\frac{1}{2}}(\partial X)$ defined by 
$$H^{\frac{1}{2}}_Y(\partial X)=\left\{f\in H^{\frac{1}{2}}(\partial X);\ \text{$f\equiv 0$ on $Y$}\right\}.$$ Recall that the Dirichlet-to-Neumann operator $\mathcal{D}_{X}$ on $X$ is defined by 
\begin{align*}
    \mathcal{D}_{X}: H^{\frac{1}{2}}(\partial X)\to H^{-\frac{1}{2}}(\partial X),
\end{align*}
$$f \mapsto  \frac{\partial H_f}{\partial \nu},$$
where $H_f$ is the unique harmonic extension of $f$ on $ X$ and $\frac{\partial}{\partial \nu}$ is the outward normal derivative along $\partial X$.
Restrict $\mathcal{D}_X$ on $H^{\frac{1}{2}}_Y(\partial X)\subset H^{\frac{1}{2}}(\partial X)$, the corresponding eigenvalues coincide with the eigenvalues of the following Steklov-Dirichlet problem
\begin{align}\label{s-d-p}
\begin{cases}
    \Delta u=0 &\text{ in }X;\\
    \frac{\partial u}{\partial \nu}=\xi u&\text{ on }\partial X\setminus Y;\\
    u=0&\text{ on }Y.
\end{cases}
\end{align}
Also assume such eigenvalues could be enumerated as
$$0<\xi_1(X, Y)\leq \xi_2(X, Y)\leq\cdots.$$
Their variational characterisation is given by (see e.g. \cite{ste-sur})
$$\xi_k(X, Y)=\min\limits_{E\in\mathcal{E}_0(k)}\max\limits_{0\neq u\in E}\frac{\int_{X}|\nabla u|^2dV_X}{\int_{\partial X}|u|^2d\sigma},$$
where $dV_X$ is the volume form on $X$ induced by $g$ and $d\sigma$ is the relative Hausdorff measure on $\partial X$. Moreover $\mathcal{E}_0(k)$ consists of all $k$-dimensional subspaces of 
$$H^1_Y(X)=\{f\in H^1(X);\ f\equiv 0\text{ on }Y\}.$$
In particular, one may conclude that
\begin{align}\label{e-ray-11}
    \xi_1(X, Y)=\min\limits_{u\in H_Y^1(X)}\frac{\int_{X}|\nabla u|^2dV_X}{\int_{\partial X}|u|^2d\sigma}.
\end{align}
For calculations of mixed Steklov-Dirichlet eigenvalues of some special Riemannian manifolds, one may refer to \cite{ste-sur, CV21, FS19} for examples.

Assume $(M,g)$ is a Riemannian manifold with smooth boundary $\partial M$ and $N$ is a compact submanifold of $M$ with smooth boundary $\partial N$. Define the interior boundary $\partial^I N$ and exterior boundary $\partial^E N$ of $N$ by 
$$\partial^I N=\partial N\cap\textnormal{int}( M)\text{ and }\partial^E N=\partial N\cap\partial M$$
respectively. One may see the illustration in Figure \ref{fig:01}, the blue part is the exterior boundary of $N$ and the red part is the interior boundary of $N$.
 \begin{figure}[ht]
\centering
\includegraphics[width=0.6\textwidth]{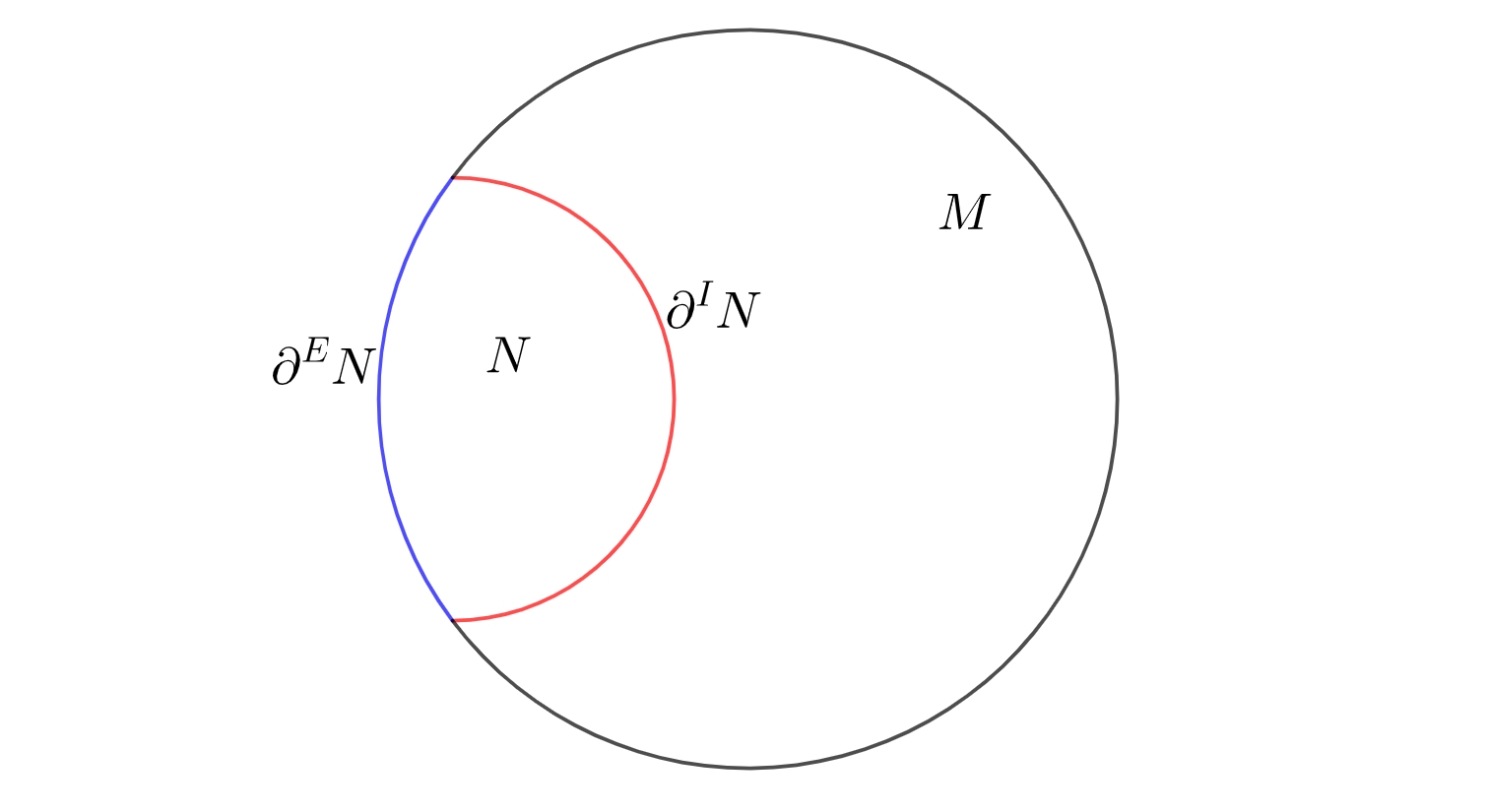}
\caption{}
\label{fig:01}
\end{figure}

In this article, we mainly consider the Steklov-Dirichlet problem \eqref{s-d-p} for the case that $X=N$ and $Y=\partial^I N $. The equality \eqref{e-ray-11} tells that
\begin{align}\label{e-ray}
    \xi_1\left(N,\partial^I N\right)=\min\limits_{u\in H^1_{\partial^I N}(N)}\frac{\int_{N}|\nabla u|^2dV_N}{\int_{\partial N}|u|^2d\sigma}.
\end{align}

\subsection{Non-compact Riemannian manifold}\label{s-2.2}
In this subsection, we consider the case of non-compact Riemannian manifold with embedded  smooth boundary. We firstly recall the definition of DtN operator on such Riemannian manifold.

Assume $(M,g)$ is a non-compact Riemannian manifold with embedded smooth boundary. For any function $f\in C_c^\infty(\partial M)$, there is a unique function $H_f\in H^1(M)$ defined as follows. Set $$f^+=\frac{1}{2}(f+|f|)\text{ and }f^-=\frac{1}{2}=(f-|f|),$$
then we have 
$$f=f^++f^-\text{ and }f^+\geq 0,\ f^-\leq 0\text{ on }\partial M.$$
Take a point $p\in\partial M$. For any $r>0$, denote by $$M_r=M\cap \mathbb{B}(p,r)$$
where $\mathbb{B}(p,r)$ is the geodesic ball with center $p$ and radius $r$. Let $H_{f_r^+}$ be the function on $M_r$ such that 
$$\begin{cases}
    \Delta H_{f_r^+}=0 &\text{in }M_r;\\
    H_{f_r^+}=0&\text{on }\partial^I M_r;\\
    H_{f_r^+}=f^+&\text{on }\partial^E M_r.
\end{cases}$$
Then $H_{f^+_r}\geq 0$ in $M_r$ for any $r>0$. There exists $r_0>0$ such that for any $r>r_0$,
$$\textnormal{Supp}(f)\subset\partial^E M_r.$$
Hence for $0<r_1<r_2$, $H_{f_{r_2}}$ is also a function on $M_{r_1}$ and 
$$\begin{cases}
    \Delta \left(H_{f_{r_2}^+}-H_{f_{r_1}^+}\right)=0 &\text{in }M_{r_1};\\
    H_{f_{r_2}^+}-H_{f_{r_1}^+}\geq 0&\text{on }\partial M_{r_1}.
\end{cases}$$
It follows that 
$$H_{f_{r_2}^+}\geq H_{f_{r_1}^+}\text{ on }M_{r_1}.$$
On the other hand, from maximum principle, we have for any $r>0$, 
\begin{align*}
\max\limits_{p\in M_r}H_{f_r^+}(p)\leq\max\limits_{p\in\partial M_r}f^+(p)\leq \max\limits_{p\in\partial M}f^+(p)<\infty.
\end{align*}
Then one may define a function $H_{f^+}$ on $M$ as 
$$H_{f^+}(q)=\lim\limits_{r\to\infty}H_{f_{r}^+}(q)\text{ for any }q\in M.$$
One easily checks that $H_{f^+}$ is not dependent on the choice of point $p$. For any point $q\in\partial M$, let $U$ be a small neighborhood of $q$. Then for some $r_0$ large enough, $\left\{H_{f_r^+}\right\}_{r\geq r_0}$ is sequence of harmonic functions on $U$, which are uniformly bounded and equicontinuous (the gradient $\left|\nabla H_{f_r^+}\right|$ is uniformly bounded). It follows that $H_f^+$ is Lipschitz continuous on $U$ and harmonic on $U\setminus\partial M$, and hence is smooth up to the boundary by the regularity theory of elliptic equations, this yields that the normal derivative $\frac{\partial H_f}{\partial \nu}(q)$ exists. With the same argument above, we may also define a function $H_{f^-}$ on $M$ for $f^-$. Set
$$H_f=H_{f^+}+H_{f^-},$$
then the function $H_f\in H^1(M)$ and it satisfies that
$$\begin{cases}
    \Delta H_f=0&\text{in }M;\\
    H_f=f&\text{on }\partial M,
    \end{cases}$$
and $\frac{\partial H_f}{\partial\nu}(p)\text{ exists}$ for any $q\in\partial M$. Now we may define an operator $\mathcal{D}_{M}^0$ on linear space $C^\infty_c(\partial M)$ as 
$$\mathcal{D}_M^0(f)=\frac{\partial H_f}{\partial \nu}\text{ for any }f\in C^\infty_c(\partial M).$$
 It is not hard to check that $\mathcal{D}_M^0$ is positive and symmetric on $C^\infty_c(\partial M)$.
Now we define the DtN operator for non-compact Riemannian manifold $M$ by the following theorem. 
\begin{theorem}\label{t-exten}
\cite[Theorem X.23]{RS-book} Let $A$ be a positive symmetric operator and let $q(\varphi,\psi)=(\varphi,A\psi)$ for $\varphi,\psi\in D(A)$. Then $q$ is closable quadratic form and its closure $\tilde{q}$ is the quadratic form of a unique self-adjoint operator $\tilde{A}$. $\tilde{A}$ is a positive extension of $A$, and the lower bound of its spectrum is the lower bound of $q$. 
\end{theorem}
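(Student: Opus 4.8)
This is the classical Friedrichs-type construction, and the plan is to realize $\tilde A$ as the inverse of a bounded operator produced by the Riesz representation theorem on a completion, then read off every asserted property from that construction. Since $A$ is positive one may assume, after adding $\mathrm{Id}$, that $(\varphi,A\varphi)\ge 0$ and work throughout with the shifted form $q_1(\varphi,\psi):=(\varphi,A\psi)+(\varphi,\psi)$, which is a genuine inner product on $D(A)$ satisfying $q_1(\varphi,\varphi)\ge\|\varphi\|^2$; everything about $q$ will follow from the corresponding statement about $q_1$ via $\tilde q_1=\tilde q+(\cdot,\cdot)$. Let $\mathcal H_1$ be the abstract completion of $(D(A),q_1)$.

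\textbf{Closability.} First I would show that the canonical map $\iota\colon\mathcal H_1\to\mathcal H$, obtained by extending $D(A)\hookrightarrow\mathcal H$ (continuous because $\|\cdot\|\le q_1(\cdot,\cdot)^{1/2}$), is injective; this is exactly closability of $q_1$, hence of $q$. If $(\varphi_n)\subset D(A)$ is $q_1$-Cauchy with $\varphi_n\to 0$ in $\mathcal H$, then for each fixed $n$ the functional $q_1(\varphi_n,\cdot)=(A\varphi_n+\varphi_n,\cdot)$ is $\mathcal H$-continuous, so $q_1(\varphi_n,\varphi_m)\to 0$ as $m\to\infty$; combining this with $|q_1(\varphi_n,\varphi_n-\varphi_m)|\le q_1(\varphi_n,\varphi_n)^{1/2}q_1(\varphi_n-\varphi_m,\varphi_n-\varphi_m)^{1/2}$ and the boundedness of a Cauchy sequence forces $q_1(\varphi_n,\varphi_n)\to 0$. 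Thus $\iota$ is injective and we identify $\mathcal H_1$ with the form domain $Q\subset\mathcal H$ carrying the closed form $\tilde q_1$ (equivalently the closure $\tilde q$ of $q$).

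\textbf{The self-adjoint operator.} For $f\in\mathcal H$ the functional $\psi\mapsto(\psi,f)$ is bounded on the Hilbert space $(\mathcal H_1,q_1)$, so Riesz gives a unique $Tf\in\mathcal H_1$ with $q_1(\psi,Tf)=(\psi,f)$ for all $\psi\in Q$. One checks $T\colon\mathcal H\to\mathcal H$ is bounded with $0\le T\le\mathrm{Id}$, self-adjoint, and injective with dense range (since $Q$ is dense in $\mathcal H$). Then $B:=T^{-1}$ is self-adjoint, $B\ge\mathrm{Id}$, $D(B)=\mathrm{Ran}\,T\subset Q$, and $q_1(\psi,\varphi)=(\psi,B\varphi)$ for $\varphi\in D(B),\ \psi\in Q$, with the characterization $D(B)=\{\varphi\in Q:\ q_1(\varphi,\cdot)\text{ is }\mathcal H\text{-bounded on }Q\}$. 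Putting $\tilde A:=B-\mathrm{Id}$ gives a positive self-adjoint operator representing $\tilde q$.

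\textbf{Extension, uniqueness, spectral bound, and the obstacle.} If $\varphi\in D(A)$, then approximating $\psi\in Q$ by $\psi_n\in D(A)$ in $\mathcal H_1$ yields $\tilde q_1(\varphi,\psi)=\lim(A\varphi+\varphi,\psi_n)=(A\varphi+\varphi,\psi)$, so by the description of $D(B)$ one gets $\varphi\in D(B)$ with $\tilde A\varphi=A\varphi$; hence $\tilde A\supset A$. Uniqueness of the self-adjoint operator attached to a closed form is standard (two such share a resolvent by the Riesz argument). For the lower bound, $D(\tilde A)=\mathrm{Ran}\,T$ is a form core and $\tilde q(\varphi,\varphi)/\|\varphi\|^2$ is continuous along $\mathcal H_1$-convergent sequences, so
\[ \inf_{0\ne\varphi\in D(\tilde A)}\frac{\tilde q(\varphi,\varphi)}{\|\varphi\|^2}=\inf_{0\ne\varphi\in Q}\frac{\tilde q(\varphi,\varphi)}{\|\varphi\|^2}=\inf_{0\ne\varphi\in D(A)}\frac{(\varphi,A\varphi)}{\|\varphi\|^2}=:m_0, \]
while the spectral theorem gives $\inf\mathrm{Spec}(\tilde A)=\inf_{\varphi\in D(\tilde A)}\tilde q(\varphi,\varphi)/\|\varphi\|^2$, so $\inf\mathrm{Spec}(\tilde A)=m_0$. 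The delicate points are the closability argument (the interplay of $q_1$-Cauchyness with $\mathcal H$-convergence) and, more substantially, extracting $\tilde A$ from the closed form via $T=(\tilde A+\mathrm{Id})^{-1}$ — proving $0\le T\le\mathrm{Id}$, self-adjointness, and density of $\mathrm{Ran}\,T$ — which is precisely where closedness of the form is used.
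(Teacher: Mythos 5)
Your construction is correct: this statement is quoted in the paper from Reed--Simon \cite[Theorem X.23]{RS-book} without proof, and your Friedrichs-type argument (completion in the shifted form norm, injectivity of the embedding for closability, Riesz representation giving $T=(\tilde A+\mathrm{Id})^{-1}$, and density of the form core for the spectral bound) is essentially the standard proof given in that cited source. No gaps beyond routine details you already flag as standard.
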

\noindent Apply Theorem \ref{t-exten} for $A=\mathcal{D}_M^0$, $D(\mathcal{D}_M^0)=C_c^\infty(\partial M)$ and 
$$q(\varphi,\psi)=\int_{\partial M}\varphi\frac{\partial H_\psi}{\partial\nu}d\sigma\text{ for all }\varphi,\psi\in C_c^\infty(\partial M).$$
 There exists a unique self-adjoint operator $\mathcal{D}_M$ which is a positive extension of $\mathcal{D}_M^0$ and it satisfies that
\begin{align}\label{e-RQ-1}
\inf\textnormal{Spec}(\mathcal{D}_{M})=\inf\limits_{f}\frac{\int_{\partial M}H_f\cdot\frac{\partial H_f}{\partial \nu}d\sigma}{\int_{\partial M}f^2d\sigma}
\end{align}
where $f$ is taken over all functions in $C_c^\infty(\partial M)$ and $H_f$ is the unique function defined above.
For any $f\in C_c^\infty(M)$, define 
$$\textnormal{Cap}(f)=\inf\limits_{\phi}\int_M |\nabla\phi|^2dV_M,$$
where $\phi$ is taken over all functions in $C_c^\infty(M)$ such that $\phi|_{\partial M}=f$. 
Now we prove the following lemma.
\begin{lemma}\label{l-ccap}
    With the same assumptions as above, 
    $$\textnormal{Cap}(f)=\int_{\partial M}H_f\cdot\frac{\partial H_f}{\partial\nu}d\sigma=\int_{M}|\nabla H_f|^2dV_M.$$
\end{lemma}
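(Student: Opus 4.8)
The plan is to exhaust $M$ by the sets $M_r=M\cap\mathbb{B}(p,r)$ from Subsection~\ref{s-2.2}, to compare $H_f$ with the truncated harmonic functions $H_{f_r}$, and to let $r\to\infty$, using that $H_{f_r}$ minimises the Dirichlet energy on $M_r$ among $H^1(M_r)$-functions equal to $f$ on $\partial^E M_r$ and to $0$ on $\partial^I M_r$. First I would fix $r_0$ so that $\mathrm{supp}\,f\subset\partial^E M_r$ for all $r\ge r_0$ and, by Sard's theorem, restrict to radii $r\ge r_0$ for which $M_r$ has piecewise-smooth boundary. Since $\Delta H_f=\Delta H_{f_r}=0$ in $M_r$, and $H_{f_r}=0$ on $\partial^I M_r$, $H_{f_r}=H_f=f$ on $\partial^E M_r$, three integrations by parts give, writing $Q:=\int_{\partial M}f\,\tfrac{\partial H_f}{\partial\nu}\,d\sigma=\int_{\partial M}H_f\,\tfrac{\partial H_f}{\partial\nu}\,d\sigma$,
\begin{align*}
\int_{M_r}|\nabla H_{f_r}|^2\,dV_M&=\int_{\partial M}f\,\tfrac{\partial H_{f_r}}{\partial\nu}\,d\sigma,\\
\int_{M_r}\nabla H_f\cdot\nabla H_{f_r}\,dV_M&=Q,\\
\int_{M_r}|\nabla H_f|^2\,dV_M&=Q+\int_{\partial^I M_r}H_f\,\tfrac{\partial H_f}{\partial\nu}\,d\sigma.
\end{align*}

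By the construction in Subsection~\ref{s-2.2} and interior/boundary elliptic estimates, $H_{f_r}\to H_f$ in $C^1_{\mathrm{loc}}$, so $\int_{M_r}|\nabla H_{f_r}|^2=\int_{\mathrm{supp}\,f}f\,\tfrac{\partial H_{f_r}}{\partial\nu}\,d\sigma\to Q$, and by Fatou's lemma (applied to $|\nabla H_{f_r}|^2\mathbf{1}_{M_r}$) together with the local convergence of the gradients, $\int_M|\nabla H_f|^2\le\liminf_r\int_{M_r}|\nabla H_{f_r}|^2=Q<\infty$. Hence $\int_{M_r}|\nabla H_f|^2\nearrow\int_M|\nabla H_f|^2$, so the third identity shows that $\int_{\partial^I M_r}H_f\,\tfrac{\partial H_f}{\partial\nu}\,d\sigma\to L:=\int_M|\nabla H_f|^2-Q\le 0$. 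The decisive step will be to expand the nonnegative quantity $\int_{M_r}|\nabla(H_f-H_{f_r})|^2$ and substitute the three identities:
$$0\le\int_{M_r}|\nabla(H_f-H_{f_r})|^2\,dV_M=\int_{\partial^I M_r}H_f\,\tfrac{\partial H_f}{\partial\nu}\,d\sigma+\int_{\partial M}f\,\tfrac{\partial H_{f_r}}{\partial\nu}\,d\sigma-Q\ \xrightarrow[\ r\to\infty\ ]{}\ L,$$
which forces $L\ge 0$, hence $L=0$ and $\int_M|\nabla H_f|^2=Q=\int_{\partial M}H_f\,\tfrac{\partial H_f}{\partial\nu}\,d\sigma$, the second equality of the lemma.

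It remains to identify this common value with $\textnormal{Cap}(f)$. Given any admissible $\phi\in C_c^\infty(M)$ with $\phi|_{\partial M}=f$, every large regular radius $r$ with $\mathrm{supp}\,\phi\subset\mathbb{B}(p,r)$ makes $\phi|_{M_r}$ a competitor for the mixed problem defining $H_{f_r}$, so $\int_{M_r}|\nabla H_{f_r}|^2\le\int_M|\nabla\phi|^2$; letting $r\to\infty$ and taking the infimum over $\phi$ gives $Q\le\textnormal{Cap}(f)$. Conversely, extending $H_{f_r}$ by $0$ outside $M_r$ produces a compactly supported Lipschitz function with trace $f$ and Dirichlet energy $\int_{M_r}|\nabla H_{f_r}|^2$; subtracting a fixed smooth compactly supported extension of $f$ and approximating the resulting $H^1_0(\mathrm{int}\,M)$-function in $H^1$ by functions in $C_c^\infty(\mathrm{int}\,M)$ yields admissible $\phi$ with energies tending to $Q$, whence $\textnormal{Cap}(f)\le Q$. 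Combining the two bounds completes the proof.

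The hard part will be the limiting arguments on the noncompact manifold: a priori $\int_M|\nabla H_f|^2$ could be infinite, and the boundary integrals $\int_{\partial^I M_r}H_f\,\tfrac{\partial H_f}{\partial\nu}\,d\sigma$ over the growing geodesic spheres are not controlled by any curvature or volume-growth hypothesis, so a naive cut-off estimate does not close the circle. The device that circumvents this is the expansion of $\int_{M_r}|\nabla(H_f-H_{f_r})|^2\ge 0$ above: the $\partial^I M_r$-term enters with a sign that forces its limit $L$ to be $\ge 0$, while Fatou's lemma already gives $L\le 0$, so $L=0$ with no extra assumptions on $M$; all other steps are routine integrations by parts and standard $H^1_0$-approximation.
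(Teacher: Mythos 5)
Your proposal is correct and follows essentially the same route as the paper: exhaustion by $M_r$, energy minimality of $H_{f_r}$, Green's formula, Fatou's lemma, and the nonnegativity of $\int_{M_r}|\nabla(H_f-H_{f_r})|^2$, merely reorganized through the explicit boundary terms and the quantity $L$. The only substantive addition is that you actually justify the identification $\textnormal{Cap}(f)=\lim_{r\to\infty}\int_{M_r}|\nabla H_{f_r}|^2\,dV_M$ (via the competitor argument and zero-extension plus $H^1$-approximation), a step the paper dispatches with ``one may check.''
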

\begin{proof}
    Notice that there exists $r_0>0$, such that for any $r>r_0$, $$\textnormal{Supp}(f)\subset \partial^E M_r.$$
For any $r_2>r_1>r_0$, $H_{f_{r_1}}$ could be regarded as function in $M_{r_2}$ by extending to $0$ in $M_{r_2}\setminus M_{r_1}$ and 
$$H_{f_{r_1}}|_{\partial M_{r_2}}=H_{f_{r_2}}|_{\partial M_{r_2}}.$$
It follows that
$$\int_{M_{r_2}}|\nabla H_{f_{r_2}}|^2dV_M\leq \int_{M_{r_1}}|\nabla H_{f_{r_1}}|^2dV_M.$$
Hence $\left\{\int_{M_{r}}|\nabla H_{f_{r}}|^2dV_M\right\}_{r>0}$ is non-increasing and  $\lim\limits_{r\to\infty}\int_{M_{r}}|\nabla H_{f_{r}}|^2dV_M$ exists.  One may check that
\begin{align}\label{e-ccap}
\textnormal{Cap}(f)=\lim\limits_{r\to\infty}\int_{M_{r}}|\nabla H_{f_{r}}|^2dV_M.
\end{align}
    For the first equality, from Green's formula and \eqref{e-ccap} we have
    \begin{align}\label{e-cpa}
       \textnormal{Cap}(f)&=\lim\limits_{r\to\infty}\int_{M_{r}}|\nabla H_{f_{r}}|^2dV_M\\
       &=\lim\limits_{r\to\infty}\int_{\partial^E M_r}f\frac{\partial H_{f_r}}{\partial\nu}d\sigma=\int_{\partial M}f\frac{\partial H_f}{\partial\nu}d\sigma,\nonumber 
    \end{align}
    where the last equality holds since $H_{f_r}$ converges to $H_f$ pointwise and $f$ is compact supported on $\partial M$. For the second equality, from \eqref{e-ccap} and Fatou's lemma, we have
    $$\int_{M_{r}}|\nabla H_{f}|^2dV_M\leq\lim\limits_{r\to\infty}\int_{M_{r}}|\nabla H_{f_{r}}|^2dV_M=\textnormal{Cap}(f).$$
    On the other hand, we have
    \begin{align}\label{e-cpa-1}
        0&\leq \int_{M_r}|\nabla(H_f-H_{f_r})|^2dV_M\\
        &=\int_{M_r}|\nabla H_f|^2dV_M+\int_{M_r}|\nabla H_{f_r}|^2dV_M-2\int_{M_r}\nabla H_f\cdot\nabla H_{f_r}dV_M.\nonumber
    \end{align}
    Together with Green's formula and \eqref{e-cpa}, it follows that
    \begin{align}\label{e-cpa-2}
        \lim\limits_{r\to\infty}\int_{M_r}\nabla H_f\cdot\nabla H_{f_r}dV_M=\lim\limits_{r\to\infty}\int_{\partial^E M_r}f\frac{\partial f_r}{\partial\nu}d\sigma=\textnormal{Cap}(f).
    \end{align}
    Combine with \eqref{e-ccap}, \eqref{e-cpa-1} and \eqref{e-cpa-2}, we have
    \begin{align*}
        \int_{M}|\nabla H_f|^2dV_M&=\lim\limits_{r\to\infty}\int_{M_r}|\nabla H_f|^2dV_M\\
        &\geq 2\lim\limits_{r\to\infty}\int_{M_r}\nabla H_f\cdot\nabla H_{f_r}dV_M-\lim\limits_{r\to\infty}\int_{M_r}|\nabla H_{f_r}|^2d\sigma\\
        &=\textnormal{Cap}(f).
    \end{align*}
    Hence $$ \int_{M}|\nabla H_f|^2dV_M=\textnormal{Cap}(f).$$ The proof is complete.
\end{proof}
\noindent Togther with \eqref{e-RQ-1} and Lemma \ref{l-ccap}, we have
\begin{align}\label{e-RQ-2}
    \inf\textnormal{Spec}(\mathcal{D}_{M})=\inf\limits_{f}\frac{\int_{M}|\nabla H_f|^2dV_{ M}}{\int_{\partial M}f^2d\sigma},
\end{align}
where $f$ is taken over all functions in $C_c^\infty(\partial M)$ and $H_f$ is the unique function defined above.

Assume $(M,g)$ is a non-compact Riemannian manifold with embedded smooth boundary. Recall that for any compact submanifold $N$ of $M$ with embedded smooth boundary, the function space $H^1_{\partial^I N}(N)$ is defined by
 $$H^1_{\partial^I N}(N)=\{f\in H^1(N);\ f=0\text{ on }\partial^I N\}.$$
 Take a point $p\in M$. For any $r>0$, denote by $$M_r=M\cap \mathbb{B}(p,r)$$
where $\mathbb{B}(p,r)$ is the geodesic ball with center $p$ and radius $r$. Notice that for any $0<r_1<r_2$ and $f\in H^1_{\partial^I M_{r_1}}(M_{r_1})$, it could be regarded as a function in $H^1_{\partial^I M_{r_2}}(M_{r_2})$ by extending to $0$ on $M_{r_2}\setminus M_{r_1}$. From \eqref{e-ray}, one may conclude that
\begin{align*}
   \xi_1\left(M_{r_2},\partial^I M_{r_2}\right)\leq \inf\limits_{f\in H^1_{\partial^I M_{r_1}}(M_{r_1})}\frac{\int_{M_{r_1}}|\nabla f|^2dV_{ M}}{\int_{\partial^E M_{r_1}}f^2d\sigma}=\xi_1\left(M_{r_1},\partial^I M_{r_1}\right).
\end{align*}
Hence $\{\xi_1(M_r,M)\}_{r>0}$ is non-increasing on $r$ and the limit $$\xi_1(M)\overset{\textnormal{def}}{=}\lim\limits_{r\to\infty}\xi_1\left(M_r,\partial^I M_r\right)$$ exists.  
 It is not hard to check that $\xi_1(M)$ does not depend on the choice of point $p$. Then we have the following lemma.
\begin{lemma}\label{l-lim-1}
With the same assumptions as above, the following equality holds,
    $$\xi_1(M)=\inf\textnormal{Spec}(\mathcal{D}_{M}).$$
\end{lemma}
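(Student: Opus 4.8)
\emph{Proof strategy.} The plan is to prove the two inequalities $\inf\textnormal{Spec}(\mathcal{D}_M)\le\xi_1(M)$ and $\xi_1(M)\le\inf\textnormal{Spec}(\mathcal{D}_M)$ by comparing both sides with the mixed Steklov--Dirichlet eigenvalues $\xi_1(M_r,\partial^I M_r)$ of the exhausting pieces and letting $r\to\infty$, using throughout the monotone limit $\xi_1(M)=\lim_{r\to\infty}\xi_1(M_r,\partial^I M_r)$ established above (it suffices to let $r\to\infty$ through radii for which $M_r$ is a compact submanifold with smooth boundary, since by monotonicity the limit along such radii is still $\xi_1(M)$). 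The first step is to rewrite the left-hand side as a Rayleigh quotient over $C_c^\infty(M)$: combining \eqref{e-RQ-2} and Lemma~\ref{l-ccap} with the definition of $\textnormal{Cap}(f)$ (whose competitors are precisely the $\phi\in C_c^\infty(M)$ with $\phi|_{\partial M}=f$), one obtains
\begin{align*}
\inf\textnormal{Spec}(\mathcal{D}_M)=\inf_{\phi}R(\phi),\qquad R(\phi):=\frac{\int_M|\nabla\phi|^2\,dV_M}{\int_{\partial M}\phi^2\,d\sigma},
\end{align*}
where $\phi$ ranges over all \emph{admissible} functions, i.e.\ $\phi\in C_c^\infty(M)$ with $\phi|_{\partial M}\not\equiv 0$; we extend this notation $R$ to compactly supported $H^1$-functions with nonvanishing boundary trace as well.

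For $\xi_1(M)\le\inf\textnormal{Spec}(\mathcal{D}_M)$ it now suffices to show $\xi_1(M)\le R(\phi)$ for every admissible $\phi$. Fix such a $\phi$ and pick $\rho>0$ with $\textnormal{Supp}(\phi)\subset\mathbb{B}(p,\rho)$. For $r>\rho$ the interior boundary $\partial^I M_r$ lies on the geodesic sphere of radius $r$ about $p$, so $\phi$ vanishes on a neighbourhood of $\partial^I M_r$; hence $\phi|_{M_r}\in H^1_{\partial^I M_r}(M_r)$, and, since $\phi|_{\partial M}$ is supported in $\partial^E M_r$, one has $\int_{M_r}|\nabla\phi|^2\,dV_M=\int_M|\nabla\phi|^2\,dV_M$ and $\int_{\partial M_r}\phi^2\,d\sigma=\int_{\partial M}\phi^2\,d\sigma$. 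Using $\phi|_{M_r}$ as a test function in the variational characterisation \eqref{e-ray} then gives $\xi_1(M_r,\partial^I M_r)\le R(\phi)$, and letting $r\to\infty$ yields $\xi_1(M)\le R(\phi)$; taking the infimum over admissible $\phi$ and using the reformulation of $\inf\textnormal{Spec}(\mathcal{D}_M)$ gives this direction.

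For the reverse inequality it suffices to show $\inf_{\phi}R(\phi)\le\xi_1(M_r,\partial^I M_r)$ for each admissible radius $r$. Fix $r$ and let $u_r\in H^1_{\partial^I M_r}(M_r)$ be a minimizer attaining $\xi_1(M_r,\partial^I M_r)$ in \eqref{e-ray} (existence is standard, via the compact embedding $H^1(M_r)\hookrightarrow L^2(\partial M_r)$); since this Rayleigh quotient is finite and $u_r$ vanishes on $\partial^I M_r$, we have $\int_{\partial^E M_r}u_r^2\,d\sigma>0$. As $u_r$ has zero trace on $\partial^I M_r$, its extension by zero $\tilde u_r$ lies in $H^1(M)$, has compact support, and satisfies $\int_M|\nabla\tilde u_r|^2\,dV_M=\int_{M_r}|\nabla u_r|^2\,dV_M$ and $\int_{\partial M}\tilde u_r^2\,d\sigma=\int_{\partial^E M_r}u_r^2\,d\sigma>0$, so $R(\tilde u_r)=\xi_1(M_r,\partial^I M_r)$. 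Approximating $\tilde u_r$ in $H^1(M)$ by functions $\phi_k\in C_c^\infty(M)$ with supports in a fixed compact set, and using local boundedness of the trace $H^1(M)\to L^2(\partial M)$, we get $R(\phi_k)\to R(\tilde u_r)$; the $\phi_k$ are eventually admissible, so $\inf_{\phi}R(\phi)\le\xi_1(M_r,\partial^I M_r)$. Letting $r\to\infty$ gives $\inf_{\phi}R(\phi)\le\xi_1(M)$, and the reformulation yields $\inf\textnormal{Spec}(\mathcal{D}_M)\le\xi_1(M)$. Together with the previous paragraph this gives the claimed equality $\xi_1(M)=\inf\textnormal{Spec}(\mathcal{D}_M)$.

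The facts I would invoke as standard are the existence of the minimizer in \eqref{e-ray} on the compact pieces $M_r$, the density of $C_c^\infty(M)$ in the space of compactly supported $H^1$-functions on a manifold with smooth boundary (localisation plus the usual translation-and-mollification near $\partial M$), and local boundedness of the trace operator $H^1\to L^2(\partial M)$. I expect the only real obstacle to be the exhaustion bookkeeping: that one may restrict to radii $r$ for which $M_r$ is a smooth compact submanifold without affecting $\xi_1(M)$, that the supports of a fixed $\phi$ and of $\phi|_{\partial M}$ eventually lie inside $M_r$ and $\partial^E M_r$, and that extension by zero across the interior boundary $\partial^I M_r$ preserves $H^1$-membership together with the values of the Dirichlet energy and the boundary integral. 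The genuinely analytic inputs — the monotone limit defining $\xi_1(M)$ and the capacity identity of Lemma~\ref{l-ccap} — are already in place.
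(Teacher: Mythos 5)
Your proof is correct and follows essentially the same route as the paper: both directions compare $\inf\textnormal{Spec}(\mathcal{D}_M)$ with the exhausting eigenvalues $\xi_1(M_r,\partial^I M_r)$ and pass to the limit, with Lemma~\ref{l-ccap} and \eqref{e-RQ-2} as the key inputs. Your reformulation $\inf\textnormal{Spec}(\mathcal{D}_M)=\inf_{\phi\in C_c^\infty(M)}R(\phi)$ and the zero-extension/density bookkeeping are just a more careful rendering of the paper's use of the eigenfunctions $f_{r_i}$ and the near-minimizing boundary datum $\phi$.
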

\begin{proof}
    Take a sequence $\{r_i\}_{i\geq 1}$ of positive real numbers such that $$\lim\limits_{i\to\infty} r_i=\infty.$$ Let $f_{r_i}\in H^1_{\partial^I M_{r_i}}(M_{r_i})$ be the function corresponds to $\xi_1(M_{r_i},M)$, i.e.
    \begin{align}\label{l-RQ}
        \xi_1\left(M_{r_i},\partial^I M_{r_i}\right)=\frac{\int_{M}|\nabla f_{r_i}|^2dV_{M}}{\int_{\partial M}f_{r_i}^2d\sigma}
    \end{align}
    and $f_{r_i}$ is a harmonic function on $M_{r_i}$ for any $i\in\mathbb{N}^+$.
    Together with \eqref{e-RQ-2}, we have 
    $$\inf\textnormal{Spec}(\mathcal{D}_{M})\leq \frac{\int_{M}|\nabla f_{r_i}|^2dV_{M}}{\int_{\partial M}f_{r_i}^2d\sigma}=\xi_1\left(M_{r_i},\partial^I M_{r_i}\right)\text{ for any }i\in\mathbb{N}.$$
    It follows that
    $$\inf\textnormal{Spec}(\mathcal{D}_{M})\leq\lim\limits_{i\to\infty}\xi_1\left(M_{r_i},\partial^I M_{r_i}\right)=\xi_1(M).$$
    On the other hand, it follows from \eqref{e-RQ-2} that for any $\epsilon>0$, there exists a function $\phi\in C_c^\infty(\partial M)$ such that 
    \begin{align}\label{e-p-1}
    \frac{\int_{M}|\nabla H_{\phi}|^2dV_{M}}{\int_{\partial M}\phi^2d\sigma}\leq\inf\textnormal{Spec}(\mathcal{D}_{M})+\epsilon.
    \end{align}
   Since $\phi$ is compact support, there exists $r_0>0$ such that for any $r>r_0$,
   $$\textnormal{Supp}(\phi)\subset M_r.$$
   Hence $\phi\in H^1_{\partial^I M_r}(M_r)$ for all $r>r_0$. It follows that
   $$\xi_1( M)=\lim\limits_{i\to\infty}\xi_1\left(M_{r_i},\partial^I M_{r_i}\right)\leq\inf\textnormal{Spec}(\mathcal{D}_{ M})+\epsilon.$$
     The proof is complete by letting $\epsilon\to 0$.
\end{proof}

\section{Capacity}
In this section, we will introduce the definition and basic properties of capacity. It is mainly based on Chapter 2 in \cite{Ma-book}. 

Assume $(M,g)$ is a compact Riemannian manifold with smooth boundary $\partial M$, and $A,B\subset M$ are two disjoint compact subsets. Consider the following space of some smooth functions,
$$\mathfrak{R}(A,B, M)=\left\{f\in C^{\infty}\left( M\right);\ f\geq 1\text{ on }A,\ f\leq 0\text{ on }B\right\}.$$
The capacity of $A$ and $B$ relative to $ M$ is defined as 
$$\textnormal{Cap}(A,B, M)=\inf\limits_{u\in\mathfrak{R}(A,B, M)}\int\limits_{ M}|\nabla u|^2dV_{ M}.$$
\begin{lemma}\label{l-1-1}
    Assume $A,B$ are two disjoint compact subsets of $M$, then 
    $$\textnormal{Cap}(A,B, M)=\inf\limits_{u\in\mathfrak{R}^\prime(A,B, M)}\int\limits_{ M}|\nabla u|^2dV_{ M},$$
    where $\mathfrak{R}^\prime(A,B, M)$ is a space of smooth functions defined by
    $$\mathfrak{R}^\prime(A,B, M)=\left\{f\in C^{\infty}\left(M\right);\ \begin{matrix} f= 1\text{ in a neighborhood of }A,\\ \text{and }f=0\text{ in a neighborhood of }B\end{matrix}\right\}.$$
\end{lemma}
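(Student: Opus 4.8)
The plan is to establish the two inequalities between $\textnormal{Cap}(A,B,M)$ and $\inf_{u\in\mathfrak{R}^\prime(A,B,M)}\int_M|\nabla u|^2dV_M$ separately. One direction is free: since any $f\in\mathfrak{R}^\prime(A,B,M)$ equals $1$ on a neighborhood of $A$ and $0$ on a neighborhood of $B$, it in particular satisfies $f\ge 1$ on $A$ and $f\le 0$ on $B$, so $\mathfrak{R}^\prime(A,B,M)\subset\mathfrak{R}(A,B,M)$ and the infimum over the smaller class is at least the infimum over the larger one, giving
$$\textnormal{Cap}(A,B,M)\le\inf_{u\in\mathfrak{R}^\prime(A,B,M)}\int_M|\nabla u|^2dV_M.$$
All the work is in the reverse inequality.

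For that, the key observation I would use is that every competitor $u\in\mathfrak{R}(A,B,M)$ is \emph{already smooth}, so no mollification is needed: an admissible competitor for $\mathfrak{R}^\prime$ can be manufactured by post-composing $u$ with a suitable smooth scalar cutoff. Precisely, for small $\epsilon\in(0,1/2)$ I would fix a smooth non-decreasing $\psi_\epsilon\colon\mathbb{R}\to[0,1]$ with $\psi_\epsilon\equiv 0$ on $(-\infty,\epsilon]$, $\psi_\epsilon\equiv 1$ on $[1-\epsilon,+\infty)$, and $0\le\psi_\epsilon^\prime\le (1-2\epsilon)^{-1}+\epsilon$ (such a $\psi_\epsilon$ exists because the unit jump is spread over an interval of length $1-2\epsilon$), and set $g\colonequals\psi_\epsilon\circ u\in C^\infty(M)$. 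Since $u$ is continuous with $u\ge 1>1-\epsilon$ on $A$, the open set $\{u>1-\epsilon\}$ is a neighborhood of $A$ on which $g\equiv 1$; likewise $\{u<\epsilon\}$ is open, contains $B$ (where $u\le 0<\epsilon$), and $g\equiv 0$ there. Hence $g\in\mathfrak{R}^\prime(A,B,M)$, and by the chain rule $|\nabla g|^2=\psi_\epsilon^\prime(u)^2\,|\nabla u|^2\le\big((1-2\epsilon)^{-1}+\epsilon\big)^2|\nabla u|^2$ pointwise on $M$.

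Integrating this bound and taking the infimum over $u\in\mathfrak{R}(A,B,M)$ would give
$$\inf_{v\in\mathfrak{R}^\prime(A,B,M)}\int_M|\nabla v|^2dV_M\le\big((1-2\epsilon)^{-1}+\epsilon\big)^2\,\textnormal{Cap}(A,B,M),$$
and letting $\epsilon\to 0^+$ finishes the proof together with the trivial inequality above. I do not expect a genuine obstacle here: the only point requiring a little attention is producing $\psi_\epsilon$ with derivative bound tending to $1$ as $\epsilon\to 0$, together with the remark that working directly with smooth competitors lets us bypass any delicate approximation argument near the boundary $\partial M$ — which would otherwise be the awkward step if one allowed merely Lipschitz or $H^1$ competitors and had to smooth them.
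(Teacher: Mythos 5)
Your proposal is correct and follows essentially the same route as the paper: the trivial inclusion $\mathfrak{R}^\prime(A,B,M)\subset\mathfrak{R}(A,B,M)$ for one inequality, and for the other a post-composition of a near-optimal smooth competitor with a smooth monotone cutoff whose derivative bound tends to $1$ (your $\psi_\epsilon$ plays exactly the role of the paper's sequence $\lambda_m$). No gaps worth noting.
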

\begin{proof}
    Denote by
    $$\textnormal{Cap}^\prime(A,B, M)=\inf\limits_{u\in\mathfrak{R}^\prime(A,B, M)}\int\limits_{ M}|\nabla u|^2dV_{ M}.$$
    It is obvious that $\mathfrak{R}^\prime(A,B, M)\subset\mathfrak{R}(A,B, M)$, hence $$\textnormal{Cap}(A,B, M)\leq\textnormal{Cap}^\prime(A,B, M).$$ On the other hand, for any $\epsilon>0$, take $f\in\mathfrak{R}(A,B, M)$ such that
    $$\int_{ M}|\nabla f|^2dV_{ M}\leq \textnormal{Cap}(A,B, M)+\epsilon.$$
Take a sequence $\{\lambda_m(t)\}_{m\geq 1}$ of functions in $C^\infty(\mathbb{R})$ such that
\begin{enumerate}
    \item $0\leq\lambda_m^\prime(t)\leq 1+m^{-1}$;
    \item $\lambda_m(t)=0$ in a neighborhood of $(-\infty,0]$ and $\lambda_m(t)=1$ in a neighborhood of $[1,\infty)$;
    \item $0\leq\lambda_m(t)\leq 1$.
\end{enumerate}
Then we have 
$\lambda_m(f(x))\in\mathfrak{R}^\prime(A,B, M)$ and 
\begin{align*}\textnormal{Cap}^\prime(A,B, M)\leq\int_{ M}|\nabla \lambda_m(f)|^2dV_{ M}&\leq (1+m^{-1})^2\int_{ M}|\nabla f|^2dV_{ M}\\
&\leq (1+m^{-1})^2(\textnormal{Cap}(A,B, M)+\epsilon).
\end{align*}
By letting $m\to\infty$ and $\epsilon\to 0$, we have
$$\textnormal{Cap}^\prime(A,B, M)\leq\textnormal{Cap}(A,B, M).$$ The proof is complete.
\end{proof}

\noindent Denote by
$$\Lambda=\left\{\lambda\in C^\infty(\mathbb{R});\ \begin{matrix}
    \lambda\text{ is non-decreasing. } \lambda(t)=0\text{ for }t\leq 0,\\ \lambda(t)=1\text{ for }t\geq 1
    \text{ and }\textnormal{Supp}(\lambda)\subset (0,1).
\end{matrix}\right\}.$$
The following lemma comes from Lemma 2 in Page 144 of \cite{Ma-book}.
\begin{lemma}\label{l-fun}
    Let $g$ be a non-negative function that is integrable on $[0,1]$, then
    $$\inf\limits_{\lambda\in\Lambda}\int_0^1(\lambda^\prime)^2gdt=\left(\int_0^1\frac{dt}{g}\right)^{-1}.$$
\end{lemma}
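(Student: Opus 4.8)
The statement is a one-dimensional calculus-of-variations identity, and the Euler--Lagrange heuristic already pins down the answer: an extremal should satisfy $\lambda'g\equiv\mathrm{const}$, i.e.\ $\lambda'(t)=c/g(t)$ with $c$ forced by $\int_0^1\lambda'\,dt=1$ to equal $(\int_0^1 dt/g)^{-1}$, which produces the claimed value. The plan is to prove the lower bound by a sharp Cauchy--Schwarz estimate and the upper bound by exhibiting an explicit minimizing sequence obtained from a truncated, normalized, and mollified copy of the profile $1/g$. It is convenient to substitute $\varphi:=\lambda'$: as $\lambda$ ranges over $\Lambda$, $\varphi$ ranges over all non-negative $\varphi\in C_c^\infty((0,1))$ with $\int_0^1\varphi\,dt=1$, and $\int_0^1(\lambda')^2g\,dt=\int_0^1\varphi^2g\,dt$, so it suffices to evaluate the infimum of $\int_0^1\varphi^2g\,dt$ over such $\varphi$.

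For the lower bound, if $\int_0^1 dt/g=+\infty$ the claimed bound is $0$ and there is nothing to do; otherwise $g>0$ almost everywhere, and for any admissible $\varphi$ one writes $1=\big(\int_0^1\varphi\sqrt g\cdot g^{-1/2}\,dt\big)^2\le\big(\int_0^1\varphi^2g\,dt\big)\big(\int_0^1 dt/g\big)$ by Cauchy--Schwarz, which gives $\int_0^1\varphi^2g\,dt\ge(\int_0^1 dt/g)^{-1}$.

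For the upper bound, set $I:=\int_0^1 dt/g\in(0,+\infty]$ and, for $k\in\mathbb N$, let $\phi_k(t):=\min\{1/g(t),k\}\,\mathbf 1_{(1/k,\,1-1/k)}(t)$ and $a_k:=\int_0^1\phi_k\,dt$. Since $\phi_k\uparrow 1/g$ pointwise a.e.\ on $(0,1)$, monotone convergence gives $a_k\uparrow I$; taking $k$ large so that $a_k>0$ and normalizing $\psi_k:=\phi_k/a_k$, one gets a bounded non-negative function with $\int_0^1\psi_k\,dt=1$ and compact support in $(0,1)$, for which the pointwise bound $\min\{1/g,k\}^2g\le\min\{1/g,k\}$ yields $\int_0^1\psi_k^2g\,dt\le a_k^{-1}$. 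Regularizing by a mollifier $\rho_\eta$ supported in $(-\eta,\eta)$, the function $\varphi_{k,\eta}:=\psi_k*\rho_\eta$ lies, for $\eta$ small, in $C_c^\infty((0,1))$, is non-negative with integral $1$, and satisfies $\varphi_{k,\eta}^2\le\psi_k^2*\rho_\eta$ by Jensen, so that $\int_0^1\varphi_{k,\eta}^2g\,dt\le\int_0^1(\psi_k^2*\rho_\eta)\,g\,dt$, which tends to $\int_0^1\psi_k^2g\,dt\le a_k^{-1}$ as $\eta\to 0$ by a routine dominated-convergence argument (everything is supported in a fixed compact subset of $(0,1)$ on which $g$ is integrable, and $\psi_k^2*\rho_\eta\to\psi_k^2$ in $L^1$ with a uniform bound). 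Choosing $\eta=\eta_k$ small enough and letting $k\to\infty$ gives $\inf\le\lim_k a_k^{-1}=I^{-1}$, which together with the lower bound proves the identity.

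The only delicate point is regularity: the genuine extremal $\lambda'=c/g$ is merely $L^1$, so it cannot be used directly inside $\Lambda$, and the real work is in truncating and mollifying it without losing control of $\int(\lambda')^2g$. The same construction automatically covers the degenerate case $\int_0^1 dt/g=+\infty$, in which the infimum equals $0$ and is attained in the limit by concentrating the transition of $\lambda$ on regions where $1/g$ has large integral.
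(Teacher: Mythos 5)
The paper itself does not prove this lemma; it is quoted from Maz'ya's book (Lemma 2, p.~144 of \cite{Ma-book}), so there is no in-paper argument to compare against. Your proof is correct and is essentially the classical one: the lower bound via Cauchy--Schwarz applied to $1=\int_0^1\lambda'\,dt$ is exactly Maz'ya's, and your upper bound --- truncate $1/g$ to $\min\{1/g,k\}\mathbf{1}_{(1/k,\,1-1/k)}$, normalize, mollify, and use the pointwise bound $\min\{1/g,k\}^2 g\le\min\{1/g,k\}$ --- is the standard way of realizing the merely $L^1$ extremal profile $\lambda'=c/g$ inside the smooth class $\Lambda$, and it automatically handles the case $\int_0^1 dt/g=\infty$. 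Two minor points. First, the identification of $\{\lambda':\lambda\in\Lambda\}$ with nonnegative $\varphi\in C_c^\infty((0,1))$ of integral one is only needed in one direction (every such $\varphi$ integrates to an element of $\Lambda$); your lower bound applies verbatim to every $\lambda\in\Lambda$, since it only uses $\lambda'\ge 0$ and $\int_0^1\lambda'=1$, so nothing is lost even though $\lambda'$ need not be compactly supported in $(0,1)$ for a general $\lambda\in\Lambda$. Second, the limit $\int(\psi_k^2*\rho_\eta)g\,dt\to\int\psi_k^2 g\,dt$ deserves one explicit line: $L^1$ convergence of $\psi_k^2*\rho_\eta$ alone does not control the integral against an unbounded $g$, but $\psi_k^2*\rho_\eta\to\psi_k^2$ at every Lebesgue point (hence a.e.) with the uniform bound $\|\psi_k\|_\infty^2$, and $\|\psi_k\|_\infty^2\,g$ is integrable, so dominated convergence gives the claim. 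With that line spelled out the argument is complete.
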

\noindent Similar to Lemma 1 in Page 144 of \cite{Ma-book}, we give a new representation to capacity $\textnormal{Cap}(A,B, M)$ in the following lemma.
\begin{lemma}\label{l-cap}
    For any two disjoint compact subsets $A,B\subset M$, 
    $$\textnormal{Cap}(A,B, M)=\inf\limits_{u\in\mathfrak{R}(A,B, M)}\left\{\int_0^1\frac{dt}{\int_{ M_u^t}|\nabla u|ds}\right\}^{-1}$$
    where $ M_u^t=\{x\in M;\ u(x)=t\}$ for any $t\in\mathbb{R}$ and $ds$ is the $n-1$ dimensional Huasdorff measure on $ M_u^t$ induced by $dV_{ M}$.
\end{lemma}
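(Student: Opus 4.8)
The plan is to prove the identity by two inequalities, using the coarea formula together with Lemma \ref{l-fun} as the bridge between the Dirichlet integral and the reciprocal-integral expression. First I would fix $u \in \mathfrak{R}(A,B,M)$ and, without loss of generality (truncating via the maps $\lambda_m$ as in Lemma \ref{l-1-1}, or simply noting that replacing $u$ by $\min\{1,\max\{0,u\}\}$ does not increase the Dirichlet integral and does not leave $\mathfrak{R}(A,B,M)$), assume $0 \le u \le 1$. The coarea formula gives
$$\int_M |\nabla u|^2 \, dV_M = \int_0^1 \left( \int_{M_u^t} |\nabla u| \, ds \right) dt,$$
where the level set $M_u^t$ and the measure $ds$ are as in the statement; here one must invoke Sard's theorem so that $M_u^t$ is a smooth hypersurface for a.e.\ $t \in (0,1)$, and the coarea formula is valid in the $H^1$/BV sense regardless. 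Writing $g(t) = \int_{M_u^t} |\nabla u| \, ds$, the Cauchy--Schwarz inequality in the form
$$1 = \int_0^1 dt = \int_0^1 \sqrt{g(t)} \cdot \frac{1}{\sqrt{g(t)}} \, dt \le \left( \int_0^1 g(t) \, dt \right)^{1/2} \left( \int_0^1 \frac{dt}{g(t)} \right)^{1/2}$$
yields $\int_M |\nabla u|^2 \, dV_M = \int_0^1 g(t)\,dt \ge \left( \int_0^1 \frac{dt}{g(t)} \right)^{-1}$. Taking the infimum over $u \in \mathfrak{R}(A,B,M)$ on both sides gives the inequality $\textnormal{Cap}(A,B,M) \ge \inf_u \left( \int_0^1 \frac{dt}{g(t)} \right)^{-1}$, i.e.\ the "$\ge$" direction.

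For the reverse inequality, I would again fix $u \in \mathfrak{R}(A,B,M)$ with $0 \le u \le 1$ and build a competitor of the form $v = \lambda \circ u$ with $\lambda \in \Lambda$, which again lies in $\mathfrak{R}(A,B,M)$ (indeed in $\mathfrak{R}'(A,B,M)$). By the chain rule $\nabla v = \lambda'(u) \nabla u$, so by the coarea formula
$$\int_M |\nabla v|^2 \, dV_M = \int_0^1 \lambda'(t)^2 \left( \int_{M_u^t} |\nabla u| \, ds \right) dt = \int_0^1 \lambda'(t)^2 \, g(t) \, dt.$$
Now apply Lemma \ref{l-fun} with this $g$: the infimum over $\lambda \in \Lambda$ of the right-hand side equals $\left( \int_0^1 \frac{dt}{g(t)} \right)^{-1}$. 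Hence $\textnormal{Cap}(A,B,M) \le \inf_{\lambda \in \Lambda} \int_M |\nabla(\lambda \circ u)|^2 \, dV_M = \left( \int_0^1 \frac{dt}{g(t)} \right)^{-1}$, and taking the infimum over $u$ gives the "$\le$" direction.

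The main obstacle, and the point needing the most care, is the rigorous justification of the coarea formula in this setting and the handling of degeneracies: the level sets $M_u^t$ need not be smooth for every $t$, $|\nabla u|$ may vanish on sets of positive measure (making $g(t) = 0$ and the integrand $1/g(t)$ infinite on a set that must be shown to have measure zero, or otherwise both sides become the degenerate value), and one should check that the truncation to $0 \le u \le 1$ is harmless. I would address this by noting that for $u \in C^\infty$ Sard's theorem makes $t$ a regular value for a.e.\ $t$, on the set of regular values $M_u^t$ is a smooth compact hypersurface and the coarea formula is the classical one; the set $\{t : g(t) = 0\}$ contributes $+\infty$ to $\int_0^1 dt/g(t)$ precisely when it has positive measure, but in that case one checks (again by coarea) that it also forces a contradiction with $u$ transitioning from $0$ on $B$ to $1$ on $A$ unless the corresponding reciprocal integral is interpreted as $+\infty$, making the stated infimum over $u$ unaffected. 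This is exactly the content of Lemma 1 on page 144 of \cite{Ma-book}, adapted from the Euclidean/Dirichlet setting to a Riemannian manifold with the mixed condition $f \ge 1$ on $A$, $f \le 0$ on $B$; the adaptation is routine once the coarea formula and Lemma \ref{l-fun} are in hand.
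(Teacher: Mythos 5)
Your proposal is correct and follows essentially the same route as the paper: the ``$\le$'' direction via composing $u$ with $\lambda\in\Lambda$, the coarea formula, and Lemma \ref{l-fun}, and the ``$\ge$'' direction via the coarea formula together with the Cauchy--Schwarz bound $\int_0^1 g\,dt\ge\bigl(\int_0^1 dt/g\bigr)^{-1}$, which the paper uses implicitly. Your added remarks on Sard's theorem, the truncation of $u$, and the degenerate case $g(t)=0$ are harmless refinements of the same argument.
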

\begin{proof}
    For $u\in\mathfrak{R}(A,B, M)$ and any $\lambda\in\Lambda$,
    \begin{align*}
         \int_{ M}|\nabla \lambda(u)|^2dV_{ M}=\int_{ M}(\lambda^\prime(u)|\nabla u|)^2dV_{ M}
         &=\int_0^1 dt\int_{ M_u^t}(\lambda^\prime(u))^2|\nabla u|ds\\
         &=\int_0^1 (\lambda^\prime(t))^2\left(\int_{ M_u^t}|\nabla u|ds\right)dt.
    \end{align*}
    Together with Lemma \ref{l-fun}, we have
    \begin{align*}\textnormal{Cap}(A,B, M)&\leq\inf\limits_{\substack{u\in\mathfrak{R}(A,B, M)\\ \lambda\in\Lambda}}\int_{ M}|\nabla \lambda(u)|^2dV_{ M} \\
    &=\inf\limits_{\substack{u\in\mathfrak{R}(A,B, M)\\ \lambda\in\Lambda}}\int_0^1 (\lambda^\prime(t))^2\left(\int_{ M_u^t}|\nabla u|ds\right)dt\\
    &\leq \inf\limits_{u\in\mathfrak{R}(A,B, M)}\left\{\int_0^1\frac{dt}{\int_{ M_u^t}|\nabla u|ds}\right\}^{-1}.
    \end{align*}
    On the other hand, for any $u\in\mathfrak{R}(A,B, M)$, we have
    \begin{align*}
        \int_{ M}|\nabla u|^2dV_{ M}=\int_0^\infty dt\int_{ M_u^t}|\nabla u|ds&\geq\int_0^1 dt\int_{ M_u^t}|\nabla u|ds\\
        &\geq\left\{\int_0^1\frac{dt}{\int_{ M_u^t}|\nabla u|ds}\right\}^{-1}.
    \end{align*}
    It follows that
    $$\textnormal{Cap}(A,B, M)\geq\inf\limits_{u\in\mathfrak{R}(A,B, M)}\left\{\int_0^1\frac{dt}{\int_{ M_u^t}|\nabla u|ds}\right\}^{-1}.$$
    The proof is complete.
\end{proof}
\noindent For any $t\in\mathbb{R}$ and $u\in C^\infty(M)$, set
$$ M_u^{\geq t}=\{x\in M;\ u(x)\geq t\}\text{ and }M_u^{\leq t}=\{x\in M;\ u(x)\leq t\}.$$
Consider the function
 $$\psi(t)=\int_0^t \frac{d\tau}{\int_{ M_u^{\geq t}}|\nabla u|ds}<\infty.$$
 Let $t(\psi)$ be the inverse function of $\psi(t)$, similar to the proof of Lemma in Page 153 of \cite{Ma-book}, one may have the following lemma.
\begin{lemma}\label{l-function}
    Assume $(M,g)$ is a Riemannian manifold with smooth boundary $\partial M$ and $u\in C^\infty\left(M\right)$ such that $$T=\sup\left\{t>0;\ \textnormal{Cap}\left( M_u^{\geq t},\  M_u^{\leq 0}\right)>0\right\}>0.$$
    With the same notations as above, then the function $t(\psi)$ is absolute continuous on segment $[0,\psi(T-\delta)]$ for any $0<\delta<T$, and 
    $$\int_{ M_u^+}|\nabla u|^2dV_{ M}\geq\int_{0}^{\psi(T-\delta)}(t^\prime(\psi))^2d\psi.$$
\end{lemma}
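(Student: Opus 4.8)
The plan is to follow the pattern of the Lemma on p.~153 of \cite{Ma-book}, writing $\rho(\tau):=\int_{M_u^\tau}|\nabla u|\,d\mathcal{H}^{n-1}$ so that $\psi(t)=\int_0^t\rho(\tau)^{-1}\,d\tau$, and applying the coarea formula in the two places where it is needed; I may assume $\int_{M_u^+}|\nabla u|^2\,dV_M<\infty$, as otherwise the asserted inequality is trivial. First I would show that $\psi(T-\delta)<\infty$. Fixing $t\in(0,T)$, consider the normalized truncation $v:=t^{-1}\min\{\max\{u,0\},t\}$, which one smooths near its two corners (equivalently, one uses that $\textnormal{Cap}(\cdot,\cdot,M)$ is unchanged when locally Lipschitz competitors are allowed). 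Then $v\in\mathfrak{R}(M_u^{\geq t},M_u^{\leq 0},M)$, its level set $M_v^{\ell}$ equals $M_u^{\ell t}$ for $\ell\in(0,1)$, and $|\nabla v|=t^{-1}|\nabla u|$ there, so the substitution $\tau=\ell t$ in Lemma \ref{l-cap} gives
$$\int_0^1\frac{d\ell}{\int_{M_v^{\ell}}|\nabla v|\,d\mathcal{H}^{n-1}}=\int_0^1\frac{t\,d\ell}{\rho(\ell t)}=\psi(t),$$
hence $\psi(t)\leq\textnormal{Cap}(M_u^{\geq t},M_u^{\leq 0},M)^{-1}$. Since this capacity is non-increasing in $t$ and positive for $t<T$ by the definition of $T$, it is finite at $t=T-\delta$, so $\rho^{-1}\in L^1[0,T-\delta]$ and $\psi$ is absolutely continuous on $[0,T-\delta]$; since in addition $\rho<\infty$ a.e.\ (by coarea and the finiteness assumption), $\psi'=\rho^{-1}>0$ a.e., so $\psi$ is strictly increasing and $t(\psi):=\psi^{-1}$ is a continuous strictly increasing bijection of $[0,\psi(T-\delta)]$ onto $[0,T-\delta]$.

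Next I would prove the absolute continuity of $t(\psi)$ via the Banach--Zarecki theorem: $t(\cdot)$ is continuous and, being monotone and bounded, of bounded variation, so it remains only to verify Luzin's condition (N). If $E\subset[0,\psi(T-\delta)]$ is a null set, then $\psi(t(E))=E$ and, $\psi$ being increasing and absolutely continuous, $|\psi(t(E))|=\int_{t(E)}\psi'(s)\,ds$; as $\psi'>0$ a.e.\ this forces $|t(E)|=0$. Hence $t(\cdot)$ is absolutely continuous on $[0,\psi(T-\delta)]$.

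Finally I would change variables $\psi=\psi(s)$ (legitimate since $\psi$ is strictly increasing and absolutely continuous), obtaining for the nonnegative integrand $(t'(\psi))^2$
$$\int_0^{\psi(T-\delta)}(t'(\psi))^2\,d\psi=\int_0^{T-\delta}\big(t'(\psi(s))\big)^2\,\psi'(s)\,ds.$$
Because $\psi$ is monotone and absolutely continuous, $t\circ\psi$ is absolutely continuous with $(t\circ\psi)'=(t'\circ\psi)\,\psi'$ a.e.; since $t\circ\psi=\textnormal{id}$ this gives $t'(\psi(s))\,\psi'(s)=1$ for a.e.\ $s$, so the right-hand side equals $\int_0^{T-\delta}\psi'(s)^{-1}\,ds=\int_0^{T-\delta}\rho(s)\,ds$. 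The coarea formula then yields
$$\int_{M_u^+}|\nabla u|^2\,dV_M=\int_0^{\infty}\rho(s)\,ds\ \geq\ \int_0^{T-\delta}\rho(s)\,ds=\int_0^{\psi(T-\delta)}(t'(\psi))^2\,d\psi,$$
which is the assertion.

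The step I expect to be the main obstacle is the absolute continuity of $t(\psi)$, and this is exactly where the hypothesis $T>0$ is used essentially: through Lemma \ref{l-cap} it forces $\psi(T-\delta)<\infty$, hence $\rho^{-1}\in L^1[0,T-\delta]$ and $\psi'>0$ a.e., which is precisely what makes Luzin's condition (N) hold for the inverse; without it $t(\psi)$ could carry a singular part and the final identity would fail. A more routine point is the use of Lemma \ref{l-cap} with the non-smooth competitor $v$, which is handled by mollification (or by the fact that the capacity is computed equally well over locally Lipschitz functions).
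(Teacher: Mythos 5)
Your proof is correct, and it is worth noting that the paper gives no argument for Lemma \ref{l-function} at all: it simply defers to the Lemma on p.~153 of \cite{Ma-book}, so your write-up is a genuine filling-in. Your skeleton matches the intended one: the bound $\psi(t)\le \textnormal{Cap}\left(M_u^{\ge t},M_u^{\le 0},M\right)^{-1}$ for $t<T$ (which is also exactly what the computation \eqref{e-2} inside Proposition \ref{p-cap} produces, there with the competitor $\pm t^{-2}u^2$ instead of your smoothed truncation) yields $\psi(T-\delta)<\infty$, hence $\rho^{-1}\in L^1$, $\psi$ absolutely continuous and strictly increasing with $\psi'=\rho^{-1}>0$ a.e. Where you diverge from the classical proof is the key step: Maz'ya obtains absolute continuity of $t(\psi)$ and the integral inequality together from the Cauchy--Schwarz estimate $\left(t(\psi_2)-t(\psi_1)\right)^2\le(\psi_2-\psi_1)\int_{t(\psi_1)}^{t(\psi_2)}\rho\,d\tau$, which shows $(t'(\psi))^2$ is dominated a.e.\ by the derivative of the absolutely continuous function $\psi\mapsto\int_0^{t(\psi)}\rho\,d\tau$; you instead invoke the Banach--Zarecki (Zarecki inverse-function) criterion, using $\psi'>0$ a.e.\ to verify Luzin's condition (N), and then get the exact identity $\int_0^{\psi(T-\delta)}(t'(\psi))^2d\psi=\int_0^{T-\delta}\rho(s)\,ds$ by the change of variables $\psi=\psi(s)$ and the a.e.\ relation $t'(\psi(s))\psi'(s)=1$. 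Both routes are sound; yours is marginally stronger (equality with $\int_0^{T-\delta}\rho$, hence the lemma via coarea) but leans on heavier standard real-analysis facts (Zarecki's criterion, a.e.\ chain rule, change of variables for monotone AC maps), whereas the classical route needs only one Cauchy--Schwarz inequality. Your two flagged caveats are harmless: the Lipschitz truncation is admissible after the mollification you describe (the paper itself already uses a non-$C^\infty$ competitor in Proposition \ref{p-cap}), and reducing to $\int_{M_u^+}|\nabla u|^2dV_M<\infty$ is needed in any treatment to ensure $\rho<\infty$ a.e.
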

Now we prove the following inequality of capacity.
\begin{proposition}\label{p-cap}
    Assume $(M,g)$ is a Riemannian manifold with smooth boundary $\partial M$ and $u\in C^\infty\left(M\right)$, then
    $$\int_0^\infty \textnormal{Cap}\left( M_u^{\geq t},\  M_u^{\leq 0},\  M\right)dt\leq 4\int_{ M_u^{\geq 0}} |\nabla u|^2dV_ M,$$

\end{proposition}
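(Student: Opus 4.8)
The plan is to use the layer-cake/coarea representation of capacity from Lemma~\ref{l-cap} together with a dyadic decomposition of the level sets, mimicking Maz'ya's argument. Fix $u\in C^\infty(M)$ and abbreviate $\mu(t)=\int_{M_u^t}|\nabla u|\,ds$ for $t\ge 0$. First I would observe that for each fixed $t>0$, the function $u$ itself (suitably normalized) is an admissible competitor for $\textnormal{Cap}(M_u^{\ge t},M_u^{\le 0},M)$: the function $v_t=\min\{1,\max\{0,u/t\}\}$ lies in $\mathfrak R'(M_u^{\ge t},M_u^{\le 0},M)$ after a routine smoothing, so Lemma~\ref{l-cap} applied on the sublevel band $\{0\le u\le t\}$ gives
$$\textnormal{Cap}\left(M_u^{\ge t},M_u^{\le 0},M\right)\le\left(\int_0^t\frac{d\tau}{\mu(\tau)}\right)^{-1}.$$
So it suffices to prove $\int_0^\infty\left(\int_0^t\mu(\tau)^{-1}d\tau\right)^{-1}dt\le 4\int_0^\infty\mu(\tau)\,d\tau$, since the right-hand side equals $\int_{M_u^{\ge 0}}|\nabla u|^2\,dV_M$ by the coarea formula.

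Next I would reduce this to a one-variable Hardy-type inequality. Writing $G(t)=\int_0^t\mu(\tau)^{-1}d\tau$, the claim is $\int_0^\infty G(t)^{-1}\,dt\le 4\int_0^\infty \mu(\tau)\,d\tau$. The main step is a dyadic estimate: partition $[0,\infty)$ into the intervals $I_k=\{t: 2^k\le G(t)<2^{k+1}\}$ for $k\in\mathbb Z$ (discarding those that are empty). On $I_k$ we have $G(t)^{-1}\le 2^{-k}$, so $\int_{I_k}G(t)^{-1}\,dt\le 2^{-k}|I_k|$. On the other hand, $G$ increases by at least $2^k$ across $I_k$ (from $\le 2^{k+1}$ at the right end, and starting at $\ge 2^k$... more carefully, $G$ grows by at least $2^k$ over $I_k\cup I_{k+1}$-type windows), and since $G'(t)=\mu(t)^{-1}$, by Cauchy–Schwarz
$$2^{k}\le\int_{I_k}\frac{d\tau}{\mu(\tau)}\le|I_k|^{1/2}\left(\int_{I_k}\frac{d\tau}{\mu(\tau)^2}\right)^{1/2},$$
which is the wrong direction; instead I would pair $|I_k|$ with $\int_{I_k}\mu(\tau)\,d\tau$ via $|I_k|^2=\left(\int_{I_k}1\,d\tau\right)^2\le\int_{I_k}\mu(\tau)\,d\tau\cdot\int_{I_k}\mu(\tau)^{-1}\,d\tau\le 2^{k+1}\int_{I_k}\mu(\tau)\,d\tau$ (using again that $\int_{I_k}G'=\int_{I_k}\mu^{-1}\le G(\text{right end})\le 2^{k+1}$). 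Hence $2^{-k}|I_k|\le 2^{-k}\cdot(2^{k+1}\int_{I_k}\mu)^{1/2}\cdot|I_k|^{1/2}$, and combining with $|I_k|\le\mu$-integral gives, after summing the geometric series in $k$, the bound $\int_0^\infty G^{-1}\,dt\le 4\int_0^\infty\mu\,d\tau$. (The constant $4$ should fall out of $\sum_k 2^{-k/2}$-type geometric sums; I would track it carefully, and if the naive dyadic bound yields a worse constant I would instead use the cleaner integration-by-parts identity $\int_0^\infty G(t)^{-1}dt = \int_0^\infty G(t)^{-2}G'(t)\,\mu(t)^{-1}\cdots$ — actually the slick route is: $\frac{d}{dt}\big({-}G(t)^{-1}\big)=G(t)^{-2}\mu(t)^{-1}$, so $\int_0^\infty G^{-1}dt$ can be integrated by parts against this, producing $\int_0^\infty G^{-1}\mu^{-1}G^{-1}\cdot(\text{stuff})$ and Cauchy–Schwarz closes it with constant exactly $4$.)

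The main obstacle I anticipate is purely technical rather than conceptual: making the competitor $v_t$ genuinely smooth and admissible (the functions $\min,\max$ are only Lipschitz, so one mollifies as in the proof of Lemma~\ref{l-1-1}, picking up a factor $(1+m^{-1})^2\to 1$), and handling the set of $t$ where $\mu(t)=0$ or $\mu(t)=\infty$ — there $G(t)^{-1}$ is interpreted as $0$ and the corresponding level contributes nothing, while Sard's theorem guarantees $M_u^t$ is a smooth hypersurface for a.e.\ $t$ so the coarea identity $\int_{M_u^{\ge 0}}|\nabla u|^2\,dV_M=\int_0^\infty\mu(t)\,dt$ is legitimate. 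A secondary point is that $\textnormal{Cap}(M_u^{\ge t},M_u^{\le 0},M)$ requires $M_u^{\ge t}$ and $M_u^{\le 0}$ to be disjoint compact sets; for $t>0$ they are disjoint, and if $u$ is unbounded one works with $M$ compact so $u$ is bounded and the $t$-integral is effectively over $[0,\max u]$. Once these routine matters are dispatched, the heart of the proof is the scalar inequality $\int_0^\infty\big(\int_0^t\mu^{-1}\big)^{-1}dt\le 4\int_0^\infty\mu\,dt$, which is a clean one-dimensional Hardy inequality and is where the constant $4$ originates.
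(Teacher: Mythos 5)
Your opening step coincides with the paper's: with $\mu(\tau)=\int_{M_u^\tau}|\nabla u|\,ds$ and $G(t)=\int_0^t\mu(\tau)^{-1}d\tau$, Lemma \ref{l-cap} applied to a rescaled competitor gives $\textnormal{Cap}\left(M_u^{\geq t},M_u^{\leq 0},M\right)\leq G(t)^{-1}$ (this is exactly the paper's bound $\textnormal{Cap}\leq 1/\psi(t)$, obtained there via the substitution $v=u^2/t^2$ in \eqref{e-2}), and the coarea formula gives $\int_{M_u^{\geq 0}}|\nabla u|^2dV_M=\int_0^\infty\mu(t)\,dt$. The genuine gap is the scalar inequality you reduce to: $\int_0^\infty G(t)^{-1}dt\leq 4\int_0^\infty\mu(t)\,dt$ is false. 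Whenever $\mu$ is positive and continuous at $0$ one has $G(t)\sim t/\mu(0)$ as $t\to 0^+$, so $G(t)^{-1}\sim\mu(0)/t$ is not integrable at $0$; concretely, for the flat cylinder $M=[0,1]\times S^1$ and $u(x,\theta)=x-\tfrac12$ one has $\textnormal{Cap}\left(M_u^{\geq t},M_u^{\leq 0},M\right)=2\pi/t$, so the left-hand side of the unweighted inequality is infinite while $4\int_{M_u^{\geq 0}}|\nabla u|^2dV_M=4\pi$. This divergence is precisely why your dyadic argument cannot close: the blocks with $G<1$ (all $k<0$) produce a geometric factor that diverges as $k\to-\infty$, and no integration-by-parts variant can rescue a false inequality. (You half-noticed the trouble when one Cauchy--Schwarz came out ``the wrong direction''; the constant $4$ was never actually extracted.)

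What the paper proves, and what it actually uses in the proof of Theorem \ref{mt-1}, is the weighted version $\int_0^\infty\textnormal{Cap}\left(M_u^{\geq t},M_u^{\leq 0},M\right)dt^2\leq 4\int_{M_u^{\geq 0}}|\nabla u|^2dV_M$, i.e. with $2t\,dt$ in place of $dt$ (the ``$dt$'' in the displayed statement is a slip; the application in Theorem \ref{mt-1} is $\int_M|\nabla f|^2dV_M\geq\frac12\int_0^\infty t\cdot\textnormal{Cap}\,dt$). The extra factor $t$ removes the $1/t$ singularity, and the resulting one-dimensional inequality $\int_0^\infty 2t\,G(t)^{-1}dt\leq 4\int_0^\infty\mu(t)\,dt$ is exactly what the paper establishes: it changes variables $\psi=G(t)$ (its $\psi(t)$), writes the left side as $2\int\frac{t(\psi)}{\psi}\,t^\prime(\psi)\,d\psi$, and combines Hardy's inequality $\int\frac{t(\psi)^2}{\psi^2}d\psi\leq 4\int t^\prime(\psi)^2d\psi$ with Cauchy--Schwarz and Lemma \ref{l-function} (which says $\int t^\prime(\psi)^2d\psi\leq\int_{M_u^+}|\nabla u|^2dV_M$, the coarea identity in the $\psi$ variable). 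So your skeleton --- capacity bound, coarea, one-dimensional Hardy-type inequality --- is parallel to the paper's, but as written it targets a false inequality; to repair it you must insert the weight $t$ and then genuinely carry out the Hardy/Cauchy--Schwarz step, which is where the constant $4$ comes from.
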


\begin{proof}
 Let $T$ be the constant defined in Lemma \ref{l-function}, then
 \begin{align}\label{e-1}
     \int_0^\infty \textnormal{Cap}\left( M_u^{\geq t},\  M_u^{\leq 0},\  M\right)dt^2&=\int_0^T \textnormal{Cap}\left( M_u^{\geq t},\  M_u^{\leq 0},\  M\right)dt^2\\
     &=\int_0^{\psi(T)}\textnormal{Cap}\left( M_u^{\geq t(\psi)},\  M_u^{\leq 0},\  M\right)d(t(\psi)^2).\nonumber
 \end{align}
 For any $t>0$, setting $\xi=t^{-2}\tau^2$ and
 $$v(p)=\begin{cases}
     t^{-2}u(p)^2&\text{ if }u(p)\geq 0;\\
     -t^{-2}u(p)^2&\text{ if }u(p)\leq 0.
 \end{cases}$$
 Then we have $v\in\mathfrak{R}\left( M_u^{\geq t}, M_u^{\leq 0}, M\right)$ and
 \begin{align}\label{e-2}
     \psi(t)=\int_0^t \frac{d\tau}{\int_{ M_u^{\geq \tau}}|\nabla u|ds}
     &=\int_0^t \frac{2\tau t^{-2}d\tau}{t^{-2}\int_{ M_u^{\geq \tau}}|2u\nabla u|ds}\\
     &=\int_0^t \frac{dt^{-2}\tau^2}{\int_{ M_u^{\geq \tau}}|\nabla t^{-2}u^2|ds}\nonumber\\
     &=\int_0^1 \frac{d\xi}{\int_{v(x)=\xi}|\nabla v|ds}.\nonumber
 \end{align}
 Together with Lemma \ref{l-cap} and \eqref{e-2}, it follows that
 \begin{align*}
     \textnormal{Cap}\left( M_u^{\geq t}, M_u^{\leq 0},\  M\right)\leq\frac{1}{\psi(t)}.
 \end{align*}
 Combine with \eqref{e-1}, we have 
 \begin{align}\label{e-3}
     \int_0^\infty \textnormal{Cap}\left( M_u^{\geq t},\  M_u^{\leq 0},\  M\right)dt^2\leq 2\int_0^{\psi(T)}\frac{t(\psi)}{\psi(t)}t^\prime(\psi)d\psi.
 \end{align}
 Hardy inequality tells that
 \begin{align}\label{e-4}
     \int_0^{\psi(T)}\frac{t(\psi)^2}{\psi^2}d\psi\leq 4\int_0^{\psi(t)}t^\prime(\psi)^2d\psi.
 \end{align}
 From \eqref{e-3}, \eqref{e-4}, Lemma \ref{l-function} and Cauchy inequality, we have 
 \begin{align*}
 \left(\int_0^\infty \textnormal{Cap}\left( M_u^{\geq t},\  M_u^{\leq 0},\  M\right)dt^2\right)^2&\leq 4\left(\int_0^{\psi(T)}\frac{t(\psi)}{\psi(t)}t^\prime(\psi)d\psi\right)^2\\
 &\leq 4\int_0^{\psi(T)}\frac{t(\psi)^2}{\psi^2}d\psi\int_0^{\psi(t)}t^\prime(\psi)^2d\psi\\
 &\leq 16\left(\int_{ M_u^+}|\nabla u|^2dV_{ M}\right)^2,
 \end{align*}
 which completes the proof.
\end{proof}
\section{Proof of Main Theorems}
We will complete the proof of main theorems in this section. Assume $(M,g)$ is a compact Riemannian manifold with smooth boundary $\partial M$. Recall that for any $t>0$ and function $u\in C(M)$,
$$M^t_u=\{p\in M;\ u(p)=t\}$$
and 
$$M^{\geq t}_u=\{p\in M;\ u(p)\geq t\},\ M^{\leq t}_u=\{p\in M;\ u(p)\leq t\}.$$

\begin{theorem}[Theorem \ref{mt-1} in introduction]
    Assume $(M,g)$ is a compact Riemannian manifold with smooth boundary $\partial M$, then 
    $$\frac{1}{4}\Gamma_{\partial}( M)\leq \sigma_1( M)\leq 2\Gamma_{\partial}( M).$$
\end{theorem}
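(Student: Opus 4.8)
The plan is to prove the two inequalities separately, using the capacitary comparison from Proposition \ref{p-cap} for the lower bound and a direct test-function construction for the upper bound.

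For the \emph{lower bound} $\frac14\Gamma_\partial(M)\le\sigma_1(M)$, I would start from the Rayleigh quotient characterisation of $\sigma_1$. Let $f$ be a first Steklov eigenfunction with harmonic extension $u=H_f$; since $\sigma_0=0$ we may assume $\int_{\partial M}f\,d\sigma=0$, so both $\{f>0\}$ and $\{f<0\}$ have positive $m$-measure on $\partial M$. Replacing $u$ by $\pm u$ if necessary, I would arrange that $m(\{f\le 0\}\cap\partial M)\ge \tfrac12 m(\partial M)$, i.e. the ``negative'' side is at least as large on the boundary. Now apply the layer-cake / coarea idea: write $\int_{\partial M}(f^+)^2\,d\sigma=\int_0^\infty m\big(\partial M\cap M_u^{\ge t}\big)\,dt^2$, and compare the numerator using Proposition \ref{p-cap}, which gives $\int_0^\infty \textnormal{Cap}(M_u^{\ge t},M_u^{\le 0},M)\,dt^2\le 4\int_{M_u^{\ge0}}|\nabla u|^2$. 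The key observation linking this to $\Gamma_\partial(M)$ is that for $A=\partial M\cap M_u^{\ge t}$ and $B=\partial M\cap M_u^{\le 0}$ one has $\textnormal{Cap}(M_u^{\ge t},M_u^{\le0},M)\ge \textnormal{Cap}(A,B)$ (the competitor class for the boundary capacity is larger, since one only constrains values on $\partial M$), and by definition of $\Gamma_\partial$, $\textnormal{Cap}(A,B)\ge\Gamma_\partial(M)\min\{m(A),m(B)\}=\Gamma_\partial(M)\,m(A)$ because $m(B)\ge\tfrac12 m(\partial M)\ge m(A)$. Chaining these, $\Gamma_\partial(M)\int_0^\infty m(A_t)\,dt^2\le 4\int_M|\nabla u|^2$, and since $\int_{\partial M}f^2\,d\sigma\le 2\int_{\partial M}(f^+)^2\,d\sigma+$ (handle the negative part symmetrically or absorb it), one extracts $\tfrac14\Gamma_\partial(M)\int_{\partial M}f^2\,d\sigma\le\int_M|\nabla u|^2$, i.e. $\tfrac14\Gamma_\partial(M)\le\sigma_1(M)$.

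For the \emph{upper bound} $\sigma_1(M)\le 2\Gamma_\partial(M)$, I would take disjoint compact $A,B\subset\partial M$ nearly realising $\Gamma_\partial(M)$, say with $m(A)\le m(B)$ and $\textnormal{Cap}(A,B)\le(\Gamma_\partial(M)+\varepsilon)m(A)$, and let $w\in\mathfrak R'(A,B,M)$ be a near-minimiser for this capacity, so $\int_M|\nabla w|^2\le\textnormal{Cap}(A,B)+\varepsilon$ with $w\equiv1$ near $A$, $w\equiv0$ near $B$, $0\le w\le1$. One cannot plug $w$ directly into the Steklov quotient because it need not have boundary average zero; instead I would use the two-sided test function trick: consider $w$ and $1-w$ (or $w-c$ for a suitable constant $c$ making $\int_{\partial M}(w-c)d\sigma$ controllable), and use the min-max over a two-dimensional space, or more simply, test the functional on $w - \bar w$ where $\bar w = \frac{1}{m(\partial M)}\int_{\partial M}w\,d\sigma$. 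The denominator $\int_{\partial M}(w-\bar w)^2\,d\sigma$ is bounded below by a quantity comparable to $\min\{m(A),m(B)\}=m(A)$ (since $w=1$ on $A$, $w=0$ on $B$, so $w-\bar w$ is bounded away from $0$ on whichever of $A,B$ is ``far'' from $\bar w$, and $m(A)\le m(\partial M)/2$ forces $\bar w$ to be controlled). The factor $2$ is exactly the price of this averaging step: $\int_{\partial M}(w-\bar w)^2\ge\tfrac12 m(A)$ roughly, giving $\sigma_1(M)\le\int_M|\nabla w|^2/\big(\tfrac12 m(A)\big)\le 2(\Gamma_\partial(M)+2\varepsilon)$, and let $\varepsilon\to0$.

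The main obstacle I anticipate is the \emph{lower-bound} argument, specifically making rigorous the inequality $\textnormal{Cap}(M_u^{\ge t},M_u^{\le0},M)\ge\textnormal{Cap}(A,B)$ with $A,B$ the boundary traces, together with the measure bookkeeping that turns $\min\{m(A),m(B)\}$ into $m(A)$ — this requires the normalisation choice on which side is larger and a clean application of the coarea formula to $(f^+)^2$ rather than $f^+$ (so that the $dt^2$ in Proposition \ref{p-cap} matches). The upper bound is more routine but the averaging constant must be tracked carefully to land exactly on $2$ rather than a worse constant; using $w$ versus $1-w$ and taking the better of the two Rayleigh quotients is the cleanest way to guarantee the factor $2$.
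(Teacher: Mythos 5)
Your proposal is correct and follows essentially the same route as the paper: the lower bound via Proposition \ref{p-cap}, monotonicity of capacity to pass to the boundary traces $M_f^{\geq t}\cap\partial M$, $M_f^{\leq 0}\cap\partial M$, and the normalisation making the positive side the smaller one so that $\min\{m(A),m(B)\}=m(A)$; the upper bound via a near-optimal capacitor pair $(A,B)$, a near-minimising function shifted by its boundary mean $c$, and the bound $(1-c)^2m(A)+c^2m(B)\ge\tfrac12\min\{m(A),m(B)\}$. The one loose point in your sketch, the bookkeeping for the negative part of the eigenfunction in the lower bound (your inequality $\int_{\partial M}f^2\,d\sigma\le 2\int_{\partial M}(f^+)^2\,d\sigma$ is not literally true, and the symmetric treatment of $f^-$ needs a median-type normalisation to keep the minimum on the right side), is glossed over in essentially the same way in the paper's own proof.
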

\begin{proof}
    Let $f$ be the eigenfunction corresponding to $\sigma_1( M)$. Assume $d\sigma$ is the $n-1$ dimensional Huasdorff measure on $\partial M$ induced by $dV_{ M}$ and for any measurable subset $U\subset\partial M$,
    $$m(U)=\int_{U}1d\sigma.$$
    Without losing of generality, one may assume 
    \begin{align}\label{e-assume}
    m\left(\left\{p\in\partial M; f(p)\geq 0\right\}\right)\leq m\left(\left\{p\in\partial M; f(p)\leq 0\right\}\right).
    \end{align}
    For the left inequality, from Proposition \ref{p-cap}, we have
    \begin{align}\label{in-1}
        \sigma_1( M)\int_{\partial M}f^2d\sigma&=\int_{ M}|\nabla f|^2dV_{ M}\\
        &\geq \frac{1}{2}\int_0^\infty t\cdot\textnormal{Cap}\left( M_f^{\geq t}, M_f^{\leq 0}, M\right)dt.\nonumber
      \end{align}
    From the monotonicity of capacity, we have
    $$\textnormal{Cap}\left( M_f^{\geq t}, M_f^{\leq 0}, M\right)\geq \textnormal{Cap}\left( M_f^{\geq t}\cap\partial M, M_f^{\leq 0}\cap\partial M, M\right),$$
    which together with assumption \eqref{e-assume}, \eqref{in-1} and the definition of $\Gamma_{\partial}(M)$ implies that 
        \begin{align*}
        \sigma_1( M)\int_{\partial M}f^2d\sigma&\geq \frac{1}{2}\int_0^\infty t\cdot\textnormal{Cap}\left( M_f^{\geq t}\cap\partial M, M_f^{\leq 0}\cap\partial M, M\right)dt\\
        &\geq\frac{1}{2}\Gamma_{\partial}( M)\int_0^\infty t\cdot m(\{p\in\partial M;\ f(p)\geq t\})dt\\
        &=\frac{1}{4}\Gamma_{\partial}( M)\int_{\partial M}f^2d\sigma.
    \end{align*}
    It follows that
    $$\sigma_1( M)\geq\frac{1}{4}\Gamma_{\partial}( M).$$
    Now we consider the right inequality, for any $\epsilon>0$, there exist two disjoint compact sets $A,B\subset \partial M$ such that
   \begin{align}\label{e-2-1}
       \frac{\textnormal{Cap}(A,B, M)}{\min\{m(A),m(B)\}}<\Gamma_{\partial}( M)+\epsilon.
   \end{align}
    There also exists a function $g\in C^\infty(M)$ such that
   \begin{align}\label{e-2-2}
       g\equiv 1\text{ on }A,\ g\equiv 0\text{ on }B
   \end{align}
   and 
   \begin{align}\label{e-2-3}
       \int_{ M}|\nabla g|^2dV_{ M}<\textnormal{Cap}(A,B, M)+\epsilon\cdot\min\{m(A),m(B)\}.
   \end{align}
   Take $$c=\frac{1}{m(\partial M)}\int_{\partial M} gd\sigma,$$
   then we have
   $$\int_{\partial M}(g-c)d\sigma=0.$$
   It follows from \eqref{e-2-1}, \eqref{e-2-2} and \eqref{e-2-3} that
   \begin{align*}
       \sigma_1( M)&\leq\frac{\int_{ M}|\nabla (g-c)|^2dV_{ M}}{\int_{\partial M}(g-c)^2d\sigma}\\
       &\leq\frac{\textnormal{Cap}(A,B, M)+\epsilon\cdot\min\{m(A),m(B)\}}{(1-c)^2\cdot m(A)+c^2\cdot m(B)}\\
       &\leq 2\cdot\frac{\textnormal{Cap}(A,B, M)+\epsilon\cdot\min\{m(A),m(B)\}}{\min\{m(A),m(B)\}}\\
       &\leq 2\Gamma_{\partial}( M)+4\epsilon.
   \end{align*}
   It follows that
   $$\sigma_1( M)\leq 2\Gamma_{\partial}( M)$$
   by letting $\epsilon$ tend to $0$. The proof is complete.
\end{proof}
\begin{rem*}
    Consider the Steklov-Dirichlet problem \eqref{s-d-p}, set
    $$\Gamma_Y(X)=\inf\limits_{\substack{F\subset\partial X\setminus Y\\
    \text{compact}}}\frac{\textnormal{Cap}(Y,F,X)}{m(F)}.$$
    Similar to the proof of Theorem \ref{mt-1}, one may obtain that
\begin{align}\label{e-ineq}
    \frac{1}{4}\Gamma_Y(X)\leq \xi_1(X,Y)\leq 2\Gamma_Y(X).
\end{align}
\end{rem*}

Now we turn to the proof of Theorem \ref{mt-2}. Assume $ (M,g)$ is a Riemannian manifold with smooth boundary $\partial M$. Recall that for any compact subset $F\subset\partial M$,
$$\textnormal{Cap}(F, M)=\inf\limits_{f}\int\limits_{ M}|\nabla f|^2dV_{ M},$$
where $f$ is taken over all smooth functions such that 

    $$f\equiv 0\text{ on }F\text{ and }
    f\in C_c^\infty(M).$$
    Moreover
    $$\Gamma_{\partial}( M)=\inf\limits_{\substack{F\subset\partial M\\ \text{compact}}}\frac{\textnormal{Cap}(F, M)}{m(F)}.$$
For compact submanifold $N$ of $ M$ with embedded boundary, define $\Gamma_{\partial}(N, M)$ by
$$\Gamma_{\partial}(N, M)=\Gamma_{\partial^I N}(N)=\inf\limits_{F\in \partial^E N}\frac{\textnormal{Cap}(F,\partial^I N,N)}{m(F)}.$$
Then one may have the following lemma.
\begin{lemma}\label{l-lim-2}
Let $(M,g)$ be a non-compact Riemannian manifold with embedded smooth boundary and $p$ is a point in $ M$. For any $r>0$, assume $\mathbb{B}(p,r)$ is the geodesic ball with center $p$ and radius $r$. Denote by $$ M_r= M\cap\mathbb{B}(p,r).$$
Then the following equality holds,
$$\Gamma_{\partial}( M)=\lim\limits_{r\to\infty}\Gamma_{\partial}( M_r, M).$$
\end{lemma}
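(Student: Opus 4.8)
The plan is to prove two inequalities: $\Gamma_\partial(M) \le \liminf_{r\to\infty} \Gamma_\partial(M_r,M)$ and $\Gamma_\partial(M) \ge \limsup_{r\to\infty}\Gamma_\partial(M_r,M)$, after first checking that $\{\Gamma_\partial(M_r,M)\}_{r>0}$ is monotone in $r$ so that the limit exists. For the monotonicity, I would observe that for $0 < r_1 < r_2$ and any compact $F \subset \partial^E M_{r_1} \subset \partial^E M_{r_2}$ (note $F$ also sits in $\partial M$, so it is a legitimate competitor at both scales once it avoids the interior boundary), any test function realizing $\textnormal{Cap}(F,\partial^I M_{r_1},M_{r_1})$ extends by zero to a test function on $M_{r_2}$ vanishing on $\partial^I M_{r_2}$, giving $\textnormal{Cap}(F,\partial^I M_{r_2},M_{r_2}) \le \textnormal{Cap}(F,\partial^I M_{r_1},M_{r_1})$ with the same $m(F)$. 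Taking the infimum over such $F$ shows $\Gamma_\partial(M_{r_2},M) \le \Gamma_\partial(M_{r_1},M)$, so the sequence is non-increasing and the limit exists; call it $L$. This is exactly the same extension-by-zero device already used in Subsection \ref{s-2.2} for $\xi_1$ and for $\textnormal{Cap}(f)$ in Lemma \ref{l-ccap}.

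For $\Gamma_\partial(M) \le L$: fix $r$ and let $F \subset \partial^E M_r$ be a compact competitor for $\Gamma_\partial(M_r,M)$ with corresponding test function $\phi \in C^\infty(M_r)$, $\phi \equiv 1$ near $F$, $\phi \equiv 0$ near $\partial^I M_r$. Extending $\phi$ by zero across $\partial^I M_r$ produces a function in $C_c^\infty(M)$ (the cutoff near $\partial^I M_r$ guarantees smoothness of the extension, after a harmless mollification and using Lemma \ref{l-1-1}-type reasoning to replace $\phi$ by one that is genuinely locally constant near the boundaries) which equals $1$ on $F \subset \partial M$; hence $\textnormal{Cap}(F,M) \le \textnormal{Cap}(F,\partial^I M_r,M_r)$, and since $F \subset \partial M$ is a valid competitor in the definition of $\Gamma_\partial(M)$, we get $\Gamma_\partial(M) \le \textnormal{Cap}(F,\partial^I M_r,M_r)/m(F)$. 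Taking the infimum over $F$ gives $\Gamma_\partial(M) \le \Gamma_\partial(M_r,M)$ for every $r$, hence $\Gamma_\partial(M) \le L$.

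For the reverse inequality $L \le \Gamma_\partial(M)$: given $\epsilon>0$, choose a compact $F \subset \partial M$ and $\phi \in C_c^\infty(M)$ with $\phi \equiv 1$ on $F$ and $\int_M |\nabla\phi|^2\,dV_M < \textnormal{Cap}(F,M) + \epsilon \cdot m(F)$, with $\textnormal{Cap}(F,M)/m(F) < \Gamma_\partial(M) + \epsilon$. Since $\phi$ and $F$ are compactly supported, there is $r_0$ with $\textnormal{Supp}(\phi) \cup F \subset M_r$ and $F \subset \partial^E M_r$, and $\phi$ vanishes near $\partial^I M_r$; thus $\phi$ restricted to $M_r$ is an admissible test function for $\textnormal{Cap}(F,\partial^I M_r,M_r)$, giving $\textnormal{Cap}(F,\partial^I M_r,M_r) \le \int_M|\nabla\phi|^2\,dV_M$. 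Therefore for all $r \ge r_0$,
$$\Gamma_\partial(M_r,M) \le \frac{\textnormal{Cap}(F,\partial^I M_r,M_r)}{m(F)} \le \frac{\textnormal{Cap}(F,M) + \epsilon\cdot m(F)}{m(F)} < \Gamma_\partial(M) + 2\epsilon,$$
so $L = \lim_{r\to\infty}\Gamma_\partial(M_r,M) \le \Gamma_\partial(M) + 2\epsilon$, and letting $\epsilon \to 0$ finishes the proof. One also remarks that $\Gamma_\partial(M)$ does not depend on the base point $p$: any two exhaustions $\{M_r\}$, $\{M_r'\}$ from different points interleave, so their limits agree.

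I expect the only genuinely delicate point to be the smoothness of the extension-by-zero in the $\Gamma_\partial(M) \le L$ direction: a function that is merely zero \emph{on} $\partial^I M_r$ need not extend smoothly, so one must first pass to a competitor that is identically zero in a \emph{neighborhood} of $\partial^I M_r$. This is precisely the content of the argument in Lemma \ref{l-1-1} (truncating by a smooth $\lambda_m$), and invoking it costs only an arbitrarily small increase in the Dirichlet energy, which is absorbed into the $\epsilon$'s. Everything else is the same extension/restriction bookkeeping already established for $\xi_1$ and for $\textnormal{Cap}(f)$ earlier in the paper.
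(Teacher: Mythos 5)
Your proposal is correct and follows essentially the same route as the paper: establish monotonicity of $\Gamma_{\partial}(M_r,M)$ via extension by zero, bound $\Gamma_{\partial}(M)\leq\Gamma_{\partial}(M_r,M)$ for each fixed $r$, and obtain the reverse inequality from a near-optimal pair $(F_0,\phi)$ with $\textnormal{Supp}(\phi)\subset M_r$ for large $r$. Your extra care with Lemma \ref{l-1-1} to make the extension by zero smooth is a welcome precision of a point the paper treats implicitly, but it does not change the argument.
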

\begin{proof}
    We firstly recall that for any two disjoint compact subsets $A,B$ of $ M$, the set $\mathfrak{R}^\prime(A,B, M)$ of smooth functions is defined as
    $$\mathfrak{R}^\prime(A,B, M)=\left\{f\in C^{\infty}\left(M\right);\ \begin{matrix} f= 1\text{ in a neighborhood of }A,\\ \text{and }f=0\text{ in a neighborhood of }B\end{matrix}\right\}.$$
     Assume $r>s>0$ and $F\subset \partial ^E M_s\subset \partial^E M_r$. For any function $f\in\mathcal{R}^\prime(F,\partial^I M_s, M_s)$, it could be regarded as a function in $\mathcal{R}^\prime(F,\partial^I M_r, M_r)$ by extending to $0$ on $ M_r\setminus M_s$. Then we have
    \begin{align*}
        \textnormal{Cap}(F,\partial^I M_r, M_r)&\leq\inf\limits_{f\in\mathfrak{R}^\prime(F,\partial^I M_s, M_s)}\int_{ M_s}|\nabla f|^2dV_{ M}\\
        &=\textnormal{Cap}(F,\partial^I M_s, M_s).
    \end{align*}
    Hence 
    $$\frac{\textnormal{Cap}(F,\partial^I M_r, M_r)}{m(F)}\leq\frac{\textnormal{Cap}(F,\partial^I M_s, M_s)}{m(F)},$$
    which implies that 
    $$\Gamma_{\partial}( M_r, M)\leq\inf\limits_{\substack{F\subset \partial^E  M_s\\ \text{compact}}}\frac{\textnormal{Cap}(F,\partial^I  M_s, M_s)}{m(F)}=\Gamma_{\partial}( M_s, M).$$
    Hence $\Gamma_{\partial}( M_r, M)$ is non-increasing in $r$ and $\lim\limits_{r\to\infty}\Gamma_{\partial}( M_r, M)$ exists. 
    For any fixed $r>0$ and compact subset $F\subset \partial^E M_r$, $F$ is also a compact subset of $\partial M$, from the definition of $\Gamma_\partial( M)$ and $\Gamma_{\partial}( M_r, M)$, one may obtain that 
    \begin{align*}
    \Gamma_{\partial}( M)&=\inf\limits_{\substack{F\subset \partial^E  M_r\\ \text{compact}}}\frac{\textnormal{Cap}(F, M)}{m(F)}\\
    &\leq\inf\limits_{\substack{F\subset \partial^E  M_r\\ \text{compact}}}\frac{\textnormal{Cap}(F, M_r)}{m(F)}=\Gamma_{\partial}( M_r, M).
    \end{align*}
    Hence 
    $$\Gamma_{\partial}( M)\leq\lim\limits_{r\to\infty}\Gamma_{\partial}( M_r, M).$$
    On the other hand, for any $\epsilon>0$, there exists a compact set $F_0\subset\partial M$ such that
    $$\frac{\textnormal{Cap}(F_0, M)}{m(F_0)}<\Gamma_{\partial}( M)+\epsilon.$$
    Take a function $f: M\to\mathbb{R}$ such that
    \begin{enumerate}
        \item $f\equiv 1\text{ on }F_0$ and $f\in C_c^\infty(M)$;
        \item $\int_{ M}|\nabla f|^2dV_{ M}<\textnormal{Cap}(F_0, M)+\epsilon\cdot m(F_0)$. 
    \end{enumerate}
   Take $r>0$ such that $\textnormal{supp}(f)\subset M_r$, then $f$ could be regarded as a function in $\mathfrak{R}(F_0,\partial^IM_r,M_r)$. Therefore
   \begin{align*}
       \Gamma_{\partial}(M_r,M)&\leq \frac{\textnormal{Cap}(F_0,\partial^I  M_r, M_r)}{m(F_0)}\\
       &\leq \frac{\int_M|\nabla f|^2dV_M}{m(F_0)}\\
       &\leq \frac{\textnormal{Cap}(F_0,M)}{m(F_0)}+\epsilon<\Gamma_{\partial}(M)+2\epsilon.
   \end{align*}
  It follows that
   $$\lim\limits_{r\to\infty}\Gamma_{\partial}( M_r, M)\leq \Gamma_{\partial}( M)$$
   by letting $\epsilon\to 0$. The proof is complete.
\end{proof}

\begin{proof}[Proof of Theorem \ref{mt-2}]
 For any $r>0$, apply \eqref{e-ineq} to $X=M_r$ and $Y=\partial^I M_r$, we have
\begin{align}\label{e-com}
\frac{1}{4}\Gamma_{\partial}(M_r, M)\leq \xi_1\left(M_r,\partial^I M_r\right)\leq 2\Gamma_{\partial}(M_r, M).
\end{align}
   Together with Lemma \ref{l-lim-1}, Lemma \ref{l-lim-2} and \eqref{e-com}, the proof is complete.
\end{proof}
\section{Estimates of Steklov eigenvalues}
\subsection{Compact hyperbolic surface with boundaries}
As a direct application, we estimate the first Steklov eigenvalue for hyperbolic surface $S_{g,n}$ with genus $g$ and $n$ geodesic boundaries, where $2g-2+n\geq 1$. Assume the shortest geodesic boundary $\gamma$ of $S_{g,n}$ has length $l_0$, now we estimate the first Steklov eigenvalue $\sigma_1\left(S_{g,n}\right)$ by $l_0$. Take $\rho_0>0$ such that
$$\sinh\frac{l_0}{2}\sinh \rho_0=1.$$
Consider the set
$$\omega_{\rho_0}(\gamma)=\{p\in S_{g,n};\ \textnormal{dist}(p,\gamma)\leq \rho_0\},$$
from Collar's lemma (one may refer to Theorem 4.1.1 in \cite{Bu-book}), $\omega_{\rho_0}(\gamma)$ is a topological cylinder (see Figure \ref{fig:02}), and the hyperbolic metric could be represented as 
$$ds^2=d\rho^2+l_0^2\cosh^2\rho dt^2.$$
 \begin{figure}[ht]
\centering
\includegraphics[width=\textwidth]{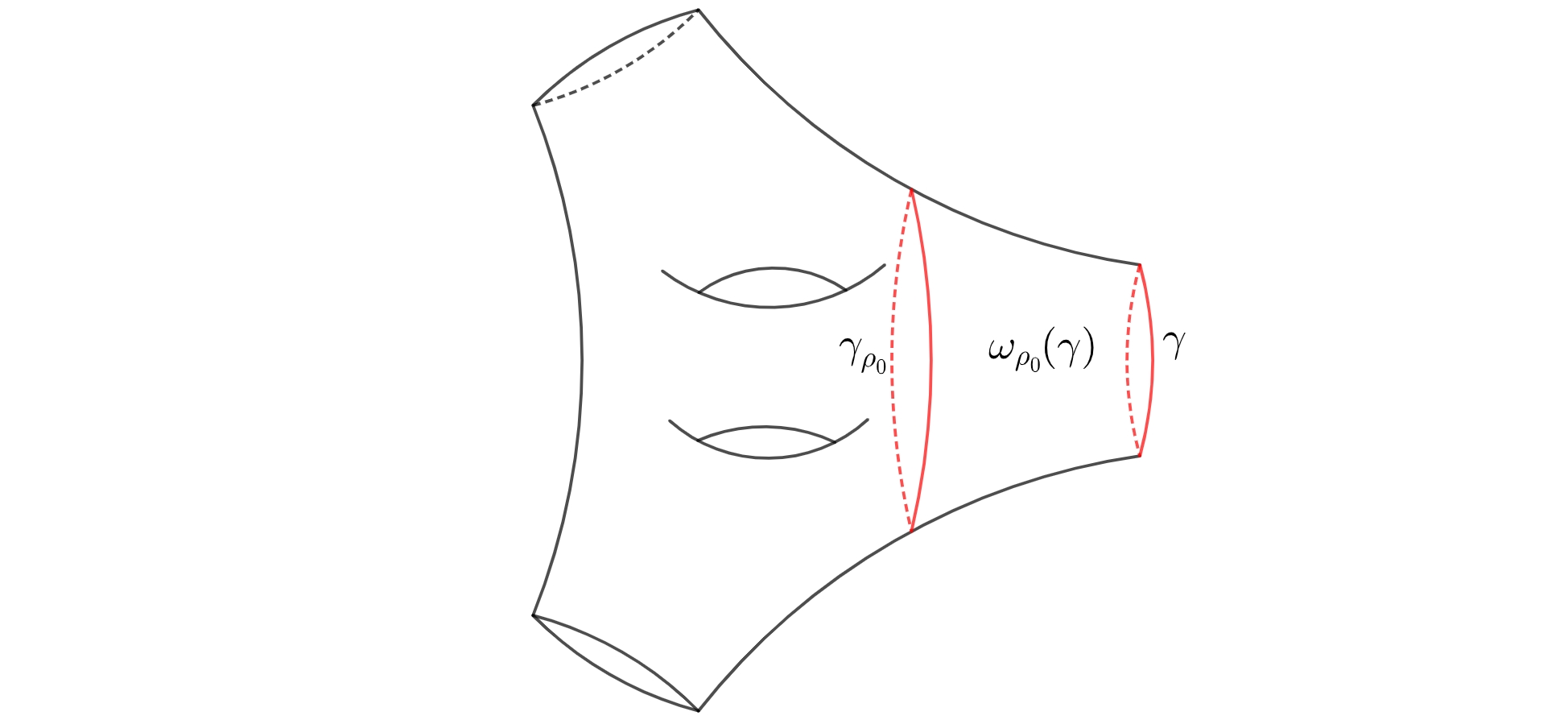}
\caption{}
\label{fig:02}
\end{figure}

\noindent Assume $\rho_1\leq \rho_0$ is a fixed positive real number, consider functions
$$f(u)=\frac{1}{\cosh u},\ h(t)=\sin2\pi t$$
and 
$$g(\rho)=\begin{cases}
    1-\frac{\rho}{\rho_1} & \text{if } 0\leq \rho\leq \rho_1;\\
    0 & \text{if }\rho_1\leq \rho\leq \rho_0.
\end{cases}$$
Now we define functions $\Phi,\Psi: \omega_{\rho_0}(\gamma)\to\mathbb{R}$ as follows. For any point $p\in\omega_{\rho_0}(\gamma)$, it could be uniquely represented as $p=(\rho,t)$, where $0\leq \rho\leq \rho_0$ and $0\leq t\leq 1$. Take
$$\Phi(p)=F(\rho)=
    1-
\frac{\int_0^{\rho}f(u)du}{\int_0^{\rho_0}f(u)du}$$
and
$$\Psi(p)=g(\rho)\cdot h(t).$$
It is not hard to check that
$$\nabla \Phi=(F^\prime(\rho),0)\text{ and }\nabla\Psi=\left(g^\prime(\rho)h(t),\frac{g(\rho)h^\prime(t)}{l_0\cosh \rho}\right).$$
Then we have
\begin{align}\label{int-grad}
\int_{\omega_{\rho_0}
}\nabla\Phi\cdot\nabla\Psi ds^2=\int_0^{\rho_0}F^\prime(\rho)g^\prime(\rho)\cosh \rho d\rho\cdot\int_0^{1}h(t)dt=0.
\end{align}
Now we estimate the square integrations of $|\nabla \Phi|$ and $|\nabla\Psi|$.
\begin{align}\label{int-phi}
    \int_{\omega_{\rho_0}(\gamma)}|\nabla\Phi|^2ds^2&=\int_0^{\rho_0}|F^\prime(\rho)|^2\cdot l_0\cosh \rho d\rho\\
    &=l_0\cdot\frac{\int_0^{\rho_0}|f(\rho)|^2\cosh \rho d\rho}{\left(\int_0^{\rho_0}f(t)dt\right)^2}\nonumber\\
    &= \frac{l_0}{\int_0^{\rho_0}\frac{1}{\cosh} \rho d\rho},\nonumber
\end{align}
\begin{align}\label{int-psi}
    \int_{\omega_{\rho_0}(\gamma)}|\nabla\Psi|^2ds^2&=\int_0^{1}\int_0^{\rho_0}|\nabla\Psi|^2\cdot l_0\cosh\rho d\rho dt\\
    &=\frac{l_0}{2}\int_0^{\rho_0}|g^\prime(\rho)|^2\cosh\rho d\rho+\frac{1}{2l_0}\int_0^{\rho_0}\frac{|g(\rho)|^2}{\cosh\rho}d\rho\nonumber\\
    &\leq\frac{l_0}{2}\cdot\frac{\sinh\rho_1}{\rho_1^2}+\frac{\rho_1}{6l_0}.\nonumber
\end{align}
Direct calculation implies that
\begin{align}\label{int-boundary}
    \int_{\gamma}|\Phi|^2=l_0\text{ and }\int_{\gamma}|\Psi|^2=l_0\int_0^1 |h(t)|^2dt=\frac{l_0}{2}.
\end{align}

Case I: $n=1$. Consider the Steklov-Dirichlet problem \eqref{s-d-p} for the case that $X=\omega_{\rho_0}(\gamma)$ and $Y=\gamma_{\rho_0}$.  We firstly estimate $\xi_2(\omega_{\rho_0}(\gamma),\gamma_{\rho_0})$. Consider
$$\mathcal{F}=\{a\Phi+b\Psi;\ a,b\in\mathbb{R}\}.$$
Then $\mathcal{F}$ is a $2$-dimension subspace of $$H^1_{\gamma_{\rho_0}}(\omega_{\rho_0}(\gamma))=\{f\in H^1(\omega_{\rho_0}(\gamma));\ f\equiv 0\text{ on }\gamma_{\rho_0}\}.$$
And for any $\varphi=a\Phi+b\Psi\in\mathcal{F}$, together with \eqref{int-grad}, \eqref{int-phi}, \eqref{int-psi} and \eqref{int-boundary}, we have
\begin{align*}
    \frac{\int_{\omega_{\rho_0}(\gamma)}|\nabla\varphi|^2}{\int_{\gamma}|\varphi|^2}&=\frac{a^2\int_{\omega_{\rho_0}(\gamma)}|\nabla\Phi|^2+b^2\int_{\omega_{\rho_0}(\gamma)}|\nabla\Psi|^2}{a^2\int_{\gamma}|\Phi|^2+b^2\int_{\gamma}|\Psi|^2}\\
    &\leq\max\left\{\frac{\int_{\omega_{\rho_0}(\gamma)}|\nabla\Phi|^2}{\int_{\gamma}|\Phi|^2},\frac{\int_{\omega_{\rho_0}(\gamma)}|\nabla\Psi|^2}{\int_{\gamma}|\Psi|^2}\right\}\\
    &\leq\max\left\{\frac{1}{\int_0^{\rho_0}\frac{1}{\cosh\rho}  d\rho},\frac{\sinh\rho_1}{\rho_1^2}+\frac{\rho_1}{3l_0^2}\right\}.
\end{align*}
Together with $(2.10)$ in \cite{ste-sur}, it follows that
$$\sigma_1(S_{g,1})\leq \xi_2(\omega_{\rho_0}(\gamma),\gamma_{\rho_0})\leq \max\left\{\frac{1}{\int_0^{\rho_0}\frac{1}{\cosh\rho} d\rho},\frac{\sinh\rho_1}{\rho_1^2}+\frac{\rho_1}{3l_0^2}\right\}. $$

Case II: $n\geq 2$. Define function $\Phi_0:S_{g,n}\to\mathbb{R}$ as follows,
$$\Phi_0(p)=\begin{cases}
    \Phi(p)& \text{if }p\in\omega_{\rho_0};\\
    0 & \text{otherwise}.
\end{cases}$$
From Theorem \ref{mt-1}, we have
$$\sigma_1\left(S_{g,n}\right)\leq 2\Gamma_{\partial}\left(S_{g,n}\right)\leq 2\frac{\int_{S_{g,n}}|\nabla \Phi_0|^2}{l_0}\leq \frac{2}{\int_0^{\rho_0}\frac{1}{\cosh\rho} d\rho}.$$
For small $l_0$, we have $l_0<1<\rho_0$, take $\rho_1=l_0$. Then 
$$\frac{1}{\int_0^{\rho_0}\frac{1}{\cosh\rho} d\rho}<\frac{e+e^{-1}}{2}$$
and
$$\frac{\sinh\rho_1}{\rho_1^2}+\frac{\rho_1}{3l_0^2}\sim\frac{4}{3l_0}\text{ as }l_0\to 0.$$
One may obtain the following corollaries.
\begin{enumerate}
    \item Assume $\{S_{g_i,1}\}_{i\geq 0}$ is a sequence of compact hyperbolic surfaces with genus $g_i$ and a geodesic boundary $\gamma_i$. If 
    $$\lim\limits_{i\to\infty}\ell(\gamma_i)=0$$
    then the normalized Steklov eigenvalue $\{\sigma_1(S_{g_i,1})\ell(\gamma_i)\}$ satisfying that  
    $$\limsup\limits_{i\to\infty}\sigma_1(S_{g_i,1})\ell(\gamma_i)\leq\frac{4}{3}.$$
    \item Assume $S_{g,n}$ is a compact hyperbolic surfaces with $n$ geodesic boundaries. If a boundary curve of $S_{g,n}$ has length $l_0<1<\rho_0$, then
    $$\sigma_1(S_{g,n})\leq e+e^{-1}\sim 3.086.$$
\end{enumerate}
\begin{rem*}
   It has been proved in \cite[Theorem A1]{Ko14} that for any compact surface $S$ with genus $0$, 
   $$\sigma_1(S)\ell(\partial S)<8\pi.$$
   According to the calculations above, for the case that $S=S_{g,n}(n\geq 2)$ is a compact hyperbolic surface with totally boundary length small, one may improve $8\pi$ to a smaller positive constant. 
  
\end{rem*}

\noindent Also consider hyperbolic surface $S_{g,n}$ with genus $g$ and $n$ boundary components. Assume each boundary component is an equidistance curve for some simple closed geodesic. For example, in the first picture of Figure \ref{fig:03}, $\gamma$ is an equidistance curve for the simple closed geodesic $\alpha$.
\begin{figure}[ht]
\centering
\includegraphics[width=0.8\textwidth]{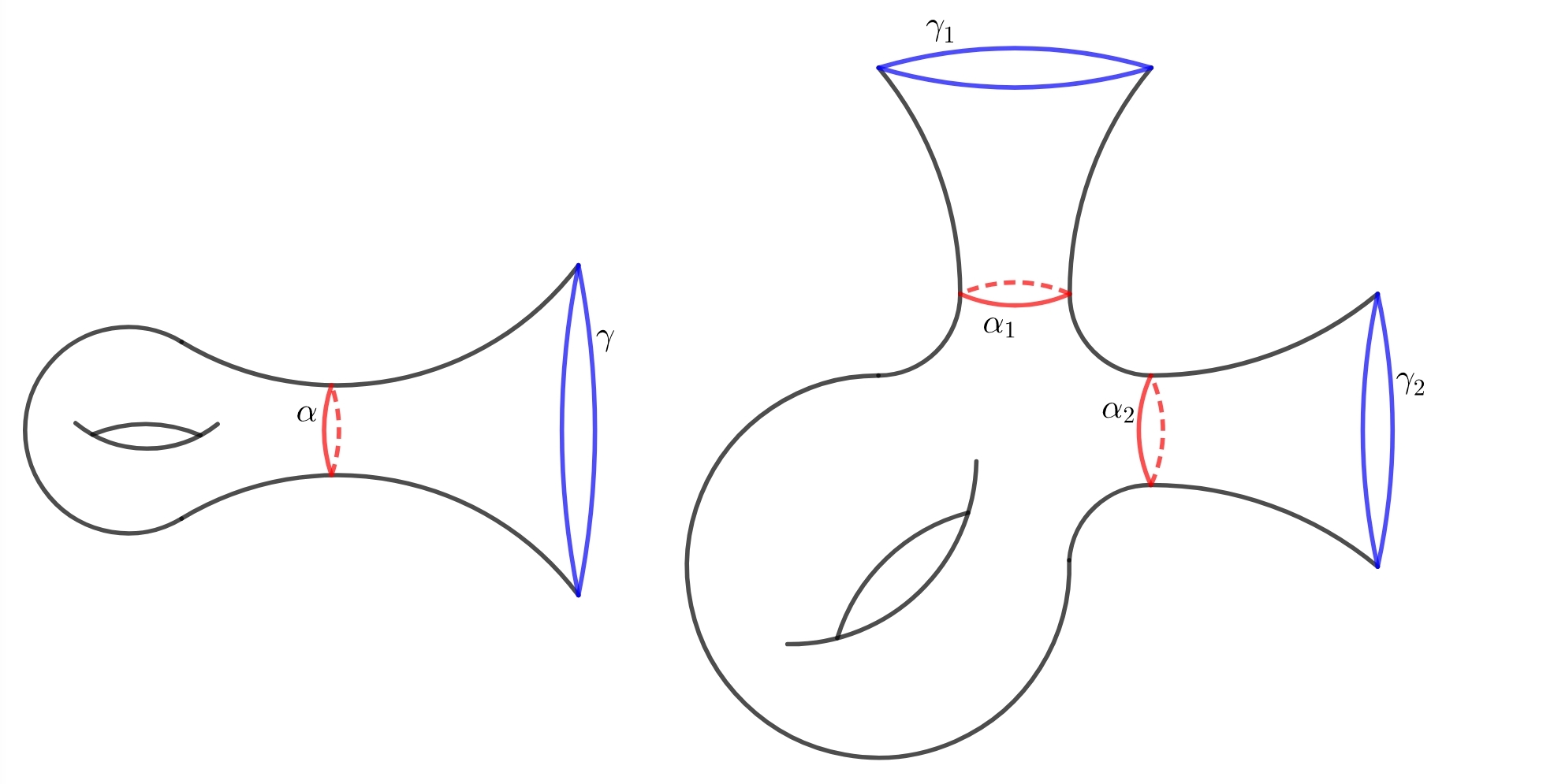}
\caption{}
\label{fig:03}
\end{figure}

Case I: $n=1$. As showed in \cite{ste-sur} (Example 4.5 in Page 39), the first Steklov eigenvalue $\sigma_1(S_{g,1})$ has a universal lower bound as boundary length tends to infinity.

Case II: $n\geq 2$. Let $\gamma_1$ and $\gamma_2$ be two boundary curves corresponding to closed geodesics $\alpha_1$ and $\alpha_2$ respectively (see the second picture in Figure \ref{fig:03}). Assume $\ell(\gamma_1)\leq\ell(\gamma_2)$, with the same method as above, one may obtain that there exists a universal constant $C$ such that
$$\sigma_1(S_{g,n})\leq \frac{C\ell(\alpha_1)}{\ell(\gamma_1)}.$$
Hence the first Steklov eigenvalue $\sigma_1(S_{g,n})$ tends to $0$ as boundary length $\ell(\gamma_1)$ tends to infinity.

\subsection{Non-compact Riemannian manifold}
Now we consider the case of non-compact Riemannian manifold, with the same method in \cite[Lemma 6.5]{Pe-book}, we prove the following lemma.
\begin{lemma}\label{l-ste}
    Let $(M,g)$ be a non-compact Riemannian manifold with embedded smooth boundary $\partial M$, if there exists a positive function $f$ on $M$ and $\lambda>0$ such that
    $$\begin{cases}
        \Delta f=0 & \text{in }M;\\
        \frac{\partial f}{\partial\nu}=\lambda f& \text{on }\partial M,
    \end{cases}$$
where $\frac{\partial}{\partial\nu}$ is the normal outward derivative along $\partial M$. Then 
$$\inf\textnormal{Spec}(\mathcal{D}_M)\geq\lambda.$$
\end{lemma}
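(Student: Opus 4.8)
The statement is a Barta-type / ground-state comparison argument adapted to the Steklov setting, and the plan is to mimic the proof of \cite[Lemma 6.5]{Pe-book} but working through the quadratic form characterization \eqref{e-RQ-2} rather than with the compact-exhaustion eigenvalues directly. First I would fix an arbitrary $\phi\in C_c^\infty(\partial M)$ and let $H_\phi$ be its harmonic extension as constructed in Subsection \ref{s-2.2}; by \eqref{e-RQ-2} it suffices to show
$$\int_M |\nabla H_\phi|^2\,dV_M\geq\lambda\int_{\partial M}\phi^2\,d\sigma.$$
The idea is to substitute $H_\phi = f\cdot u$, where $f$ is the given positive Steklov-harmonic function, so that $u=H_\phi/f$ is a well-defined smooth function which is compactly supported on $\partial M$ (though not necessarily on $M$). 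A direct computation gives $|\nabla H_\phi|^2 = f^2|\nabla u|^2 + u^2|\nabla f|^2 + 2fu\,\nabla f\cdot\nabla u = f^2|\nabla u|^2 + \tfrac12\nabla(u^2)\cdot\nabla(f^2)+u^2|\nabla f|^2$, and since $f$ is harmonic, $\operatorname{div}(f\nabla f)=|\nabla f|^2$, so the cross terms can be reorganized via the divergence theorem into a boundary term.

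The key step is the integration by parts on an exhaustion $M_r=M\cap\mathbb{B}(p,r)$. On $M_r$ one writes $\int_{M_r}|\nabla H_\phi|^2 = \int_{M_r} f^2|\nabla u|^2 + \int_{M_r}\operatorname{div}(u^2 f\nabla f)$, using $\operatorname{div}(u^2 f\nabla f) = u^2|\nabla f|^2 + 2uf\,\nabla u\cdot\nabla f$ and $f\Delta f=0$. The divergence theorem turns the second integral into $\int_{\partial M_r} u^2 f\frac{\partial f}{\partial\nu}$. This splits over $\partial^E M_r\subset\partial M$ and $\partial^I M_r$. On $\partial^E M_r$, for $r$ large enough that $\operatorname{Supp}(\phi)\subset\partial^E M_r$, we have $u^2 f\frac{\partial f}{\partial\nu} = (H_\phi/f)^2 f\cdot\lambda f = \lambda\phi^2$ on $\operatorname{Supp}(\phi)$ and $0$ elsewhere on $\partial^E M_r$, contributing exactly $\lambda\int_{\partial M}\phi^2\,d\sigma$. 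On $\partial^I M_r$, the boundary term is $\int_{\partial^I M_r} u^2 f\frac{\partial f}{\partial\nu}$; since $f>0$ and $H_\phi$ stays bounded (by the maximum principle argument in Subsection \ref{s-2.2}), one expects this term to be controlled and to vanish, or at least to have a subsequence along which it vanishes or stays nonnegative, as $r\to\infty$. Dropping the nonnegative term $\int_{M_r} f^2|\nabla u|^2\geq 0$ and letting $r\to\infty$ then yields $\int_M|\nabla H_\phi|^2\geq\lambda\int_{\partial M}\phi^2\,d\sigma$, and taking the infimum over $\phi$ via \eqref{e-RQ-2} finishes the proof.

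The main obstacle is the control of the interior boundary term $\int_{\partial^I M_r} u^2 f\frac{\partial f}{\partial\nu}\,d\sigma$ as $r\to\infty$. Unlike the compact case there is no reason this vanishes for every $r$; the standard remedy (as in \cite{Pe-book}) is to choose the exhaustion cleverly and use a co-area / mean-value argument: since $\int_M f^2|\nabla u|^2<\infty$ would follow once the inequality is known, one instead argues that the average over $r\in[R,2R]$ of a suitable flux quantity must be small, forcing a good choice of radii $r_j\to\infty$ along which the interior term does not obstruct the inequality. Handling the sign and the regularity of $f\frac{\partial f}{\partial\nu}$ on $\partial^I M_r$ (which is only Lipschitz, as $\mathbb{B}(p,r)$ need not meet $\partial M$ transversally) requires either perturbing $r$ to a regular value via Sard's theorem, or replacing geodesic balls by a smooth exhaustion. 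I expect this technical point — choosing the exhaustion so the interior flux term is harmless — to be the only real content; everything else is the algebraic identity $|\nabla(fu)|^2 = f^2|\nabla u|^2 + \operatorname{div}(u^2 f\nabla f)$ for harmonic $f$, together with the variational formula \eqref{e-RQ-2} already established.
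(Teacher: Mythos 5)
Your reduction via \eqref{e-RQ-2} to showing $\int_M|\nabla H_\phi|^2dV_M\geq\lambda\int_{\partial M}\phi^2d\sigma$ and the identity $|\nabla(fu)|^2=f^2|\nabla u|^2+\operatorname{div}(u^2f\nabla f)$ for harmonic $f>0$ are both fine, but the proof is not closed at exactly the point you flag: the interior flux term $\int_{\partial^I M_r}u^2f\frac{\partial f}{\partial\nu}d\sigma$ with $u=H_\phi/f$. This term has no sign, and the co-area/averaging remedy you sketch would require an integrability input such as $\int_M H_\phi^2\,\frac{|\nabla f|}{f}\,dV_M<\infty$ (or finiteness of $\int_M f^2|\nabla u|^2$ together with control of $f|\nabla f|$ near infinity), none of which is available from the hypotheses; indeed your own remark that the needed finiteness ``would follow once the inequality is known'' is circular. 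So, as written, the argument has a genuine gap. The simplest repair within your own framework is to avoid the harmonic extension altogether: for $v\in C_c^\infty(M)$ with $v|_{\partial M}=\phi$, set $u=v/f$, which has compact support, so the divergence theorem produces only the $\partial M$ boundary term and gives $\int_M|\nabla v|^2dV_M\geq\lambda\int_{\partial M}\phi^2d\sigma$ with no interior flux at all; taking the infimum over such $v$ yields $\textnormal{Cap}(\phi)\geq\lambda\int_{\partial M}\phi^2d\sigma$, and Lemma \ref{l-ccap} ($\textnormal{Cap}(\phi)=\int_M|\nabla H_\phi|^2dV_M$) together with \eqref{e-RQ-2} then finishes.

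For comparison, the paper takes a different route that sidesteps the flux problem by construction: on the exhaustion $M_r$ it compares $f$ with the first mixed Steklov--Dirichlet eigenfunction $f_r$ of $(M_r,\partial^I M_r)$ via Green's identity. Since $f_r\equiv 0$ on $\partial^I M_r$ and $f_r>0$ in the interior, the interior boundary contribution reduces to $-\int_{\partial^I M_r}f\frac{\partial f_r}{\partial\nu}d\sigma\geq 0$ (a Hopf-type sign), which forces $\xi_1(M_r,\partial^I M_r)\geq\lambda$ for every $r$, and Lemma \ref{l-lim-1} then gives $\inf\textnormal{Spec}(\mathcal{D}_M)=\lim_{r\to\infty}\xi_1(M_r,\partial^I M_r)\geq\lambda$. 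In other words, the paper kills the problematic term by exploiting the Dirichlet condition and positivity of the approximating eigenfunction rather than by any decay at infinity---precisely the mechanism your proposal is missing.
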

\begin{proof}
    Take a point $p\in M$. For any $r>0$, denote by
    $$M_r=M\cap\mathbb{B}(p,r),$$
    where $\mathbb{B}(p,r)$ is the geodesic ball with center $p$ and radius $r$. 
    
    Consider the Steklov-Dirichlet problem \eqref{s-d-p} for the case that $X=M_r$ and $Y=\partial^I M_r$. Assume it has the first eigenvalue $\xi_{1,r}=\xi_1(M_r,\partial^I M_r)$ and $f_r$ is the corresponding eigenfunction. Then $f_r$ is positive in the interior of $M_r$ and 
    \begin{align*}
        (\xi_{1,r}-\lambda)\int_{\partial^E M_r}ff_r&=\int_{\partial^E M_r}f\frac{\partial f_r}{\partial\nu}-\int_{\partial^E M_r}f_r\frac{\partial f}{\partial\nu}\\
        &=-\int_{\partial^I M_r}f\frac{\partial f_r}{\partial\nu}\geq 0,
    \end{align*}
    where the last inequality holds since $f_r$ is positive in the interior of $M_r$ and $f\equiv 0$ on $\partial^I M_r$ imply that
   $$\frac{\partial f_r}{\partial\nu}\leq 0\text{ on }\partial^I M_r.$$ 
   Together with Lemma \ref{l-lim-1}, we have
   $$\inf\textnormal{Spec}(\mathcal{D}_{M})=\lim\limits_{r\to \infty}\xi_{1.r}\geq\lambda.$$
\end{proof}
\noindent As a direct application of the above lemma, we consider 
$$\mathbb{D}^{n+1}_+=\left\{(x_1,...,x_{n},y);\ \sum\limits_{i=1}^n x_i^2<1,\ y\geq 0\right\}$$
endowed with the standard hyperbolic metric 
$$ds^2=4\cdot\frac{\sum\limits_{i=1}^{n} dx_i^2+dy^2}{\left(1-\sum\limits_{i=1}^{n} x_i^2-y^2\right)^2}.$$
Then $\mathbb{D}^{n+1}_+$ is a non-compact Riemannian manifold with boundary
$$\partial\mathbb{D}^{n+1}_+=\left\{(x_1,...,x_{n},0);\ \sum\limits_{i=1}^{n}x_i^2<1\right\}.$$
One easily checks that $\mathbb{D}^{n+1}_+$ is isometric to 
$$\mathbb{H}^{n+1}_R=\left\{(x_1,...,x_n,y);\ y>0,\ x_n\geq 0\right\}$$
endowed with hyperbolic metric 
$$ds^2=\frac{1}{y^2}\left(\sum\limits_{i=1}^{n}dx^2_i+dy^2\right)$$
and boundary
$$\partial\mathbb{H}^{n+1}_R=\{(x_1,...,x_{n-1},0,y);\ y>0\}.$$
For any $p=(x_1,...,x_n,y)\in\mathbb{H}^{n+1}_R$, assume $p^\prime$ is the hyperbolic projection of $p$ on $\partial\mathbb{H}^{n+1}_R$ (see Figure \ref{fig:04}), then $p^\prime$ could be represented as 
$$p^\prime=\left(x_1,...,x_{n-1},0,\sqrt{x_n^2+y^2}\right)$$
and 
$$\cosh\textnormal{dist}(p,\partial\mathbb{H}^{n+1}_R)=\cosh\textnormal{dist}(p,p^\prime)=\frac{\sqrt{x_n^2+y^2}}{y}.$$
\begin{figure}[ht]
\centering
\includegraphics[width=0.8\textwidth]{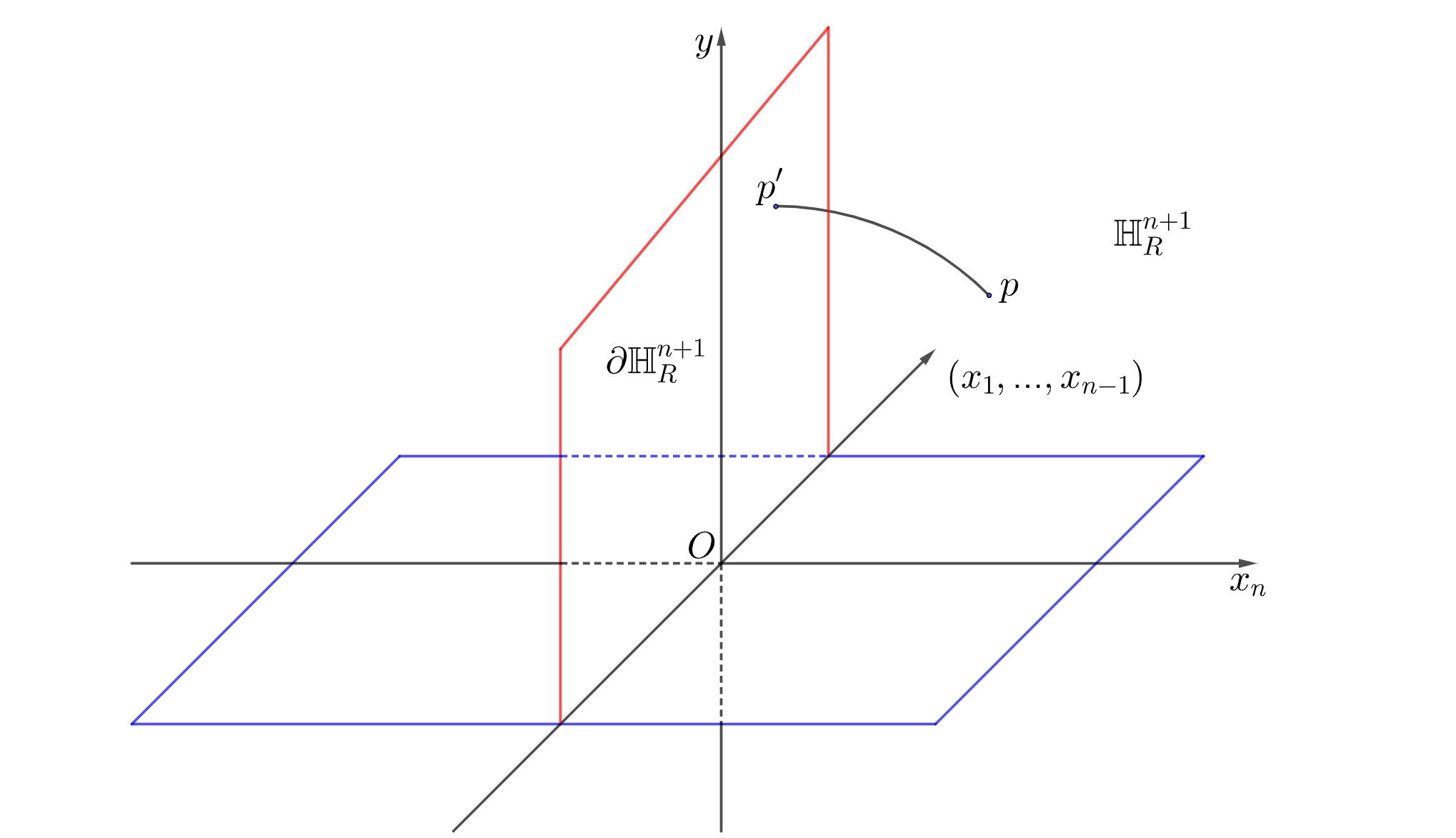}
\caption{}
\label{fig:04}
\end{figure}

\noindent Hence $\mathbb{H}^{n+1}_R$ has another coordinate representation $(u_1,...,u_n,r)$ such that for any point $p=(x_1,...,x_n,y)\in\mathbb{H}^{n+1}_R$,
$$\begin{cases}
    u_i(p)=x_i \text{ for } 1\leq i\leq n-1;\\
    u_n(p)=y;\\
    r(p)=\textnormal{dist}(p,\partial\mathbb{H}^{n+1}_R)=\textnormal{arccosh} \frac{\sqrt{x_n^2+y^2}}{y}.
\end{cases}$$
Direct calculation implies that under this coordinate, hyperbolic metric could be represented as 
$$ds^2=dr^2+\frac{\cosh^2 r}{u^2_n}\sum\limits_{i=1}^ndu_i^2.$$
Moreover the Laplacian operator could be represented as
$$\Delta_{\mathbb{H}^{n+1}_R}=\frac{\partial^2}{\partial r^2}+\frac{n\sinh r}{\cosh r}\cdot\frac{\partial}{\partial r}+\frac{u_n^2}{\cosh^2 r}\sum\limits_{i=1}^n \frac{\partial^2}{\partial u_i^2}.$$
Define a function $\varphi_n:\mathbb{H}^{n+1}_R\to\mathbb{R}$ as follows: for any $p=(u_1,...,u_n,r)$,
$$\varphi_n(p)=1-c_n\int_0^r\frac{1}{\cosh^n s}ds,$$
where $c_n=\left(\int_0^\infty\frac{1}{\cosh^n s}ds\right)^{-1}$. 
Then one easily checks that 
\begin{enumerate}[(a)]
    \item $\Delta_{\mathbb{H}^{n+1}_R}\varphi_n=0$ on $\mathbb{H}^{n+1}_R$;
    \item $\varphi_n(p)=1$ for $p\in\partial\mathbb{H}^{n+1}_R$ and $\varphi_n(p)\geq 0$ for $p\in\mathbb{H}^{n+1}_R$;
    \item $\frac{\partial \varphi_n}{\partial\nu}(p)=\frac{\partial \varphi_n}{\partial r}(p)=c_n\varphi_n(p)$ for $p\in\partial\mathbb{H}^{n+1}_R$,
\end{enumerate}
where $\frac{\partial}{\partial\nu}$ is the outward normal derivative along $\partial\mathbb{H}^{n+1}_R$. From Lemma \ref{l-ste}, we have 
$$\inf\textnormal{Spec}\left(\mathcal{D}_{\mathbb{D}^{n+1}_R}\right)=\inf\textnormal{Spec}\left(\mathcal{D}_{\mathbb{H}^{n+1}_R}\right)\geq c_n=\left(\int_0^\infty\frac{1}{\cosh^n s}ds\right)^{-1}.$$

Now we give the an expression of the DtN operator $\mathcal{D}_{\mathbb{D}^{n+1}_+}$ for $n\geq 1$. The Green function on $\mathbb{D}^{n+1}$ could be represented as 
$$G(p,q)=\frac{1}{\textnormal{Vol}(S^{n})}\int_{\textnormal{dist}(p,q)}^\infty \frac{1}{\sinh^{n}t}dt\text{ for }p,q\in\mathbb{D}^{n+1},$$
where $\textnormal{Vol}(S^n)$ is Euclidean volume of $n-$dimension sphere.
 The Green function $G^+$ on $\mathbb{D}^{n+1}_+$ could be represented as 
$$G^+(p,q)=G(p,q)-G(p,q^\prime),$$
 where $p,q\in\mathbb{D}^{n+1}_+$ and $q^\prime$ is the mirror symmetric point of $q$ corresponding to $\partial\mathbb{D}^{n+1}_+$.
 Then for any point $p_0\in\mathbb{D}^{n+1}_+$ and $q_0\in\partial\mathbb{D}^{n+1}_+$, 
 \begin{align*}
     P(p_0,q_0)&=-\frac{\partial G^+(p_0,q)}{\partial\nu}|_{q=q_0}\\
     &=-\frac{2}{\textnormal{Vol}(S^n)}\cdot\frac{1}{\sinh^{n}\textnormal{dist}(p_0,q_0)}\cdot\frac{\partial\ \textnormal{dist}(p_0,q)}{\partial\nu}|_{q=q_0},
 \end{align*}
 where $\frac{\partial}{\partial\nu}$ is the outward normal derivative along $\partial\mathbb{D}^{n+1}_R$. For any function $f$, the function $H_f$ on $\partial\mathbb{D}^{n+1}_+$ such that
$$\begin{cases}
 \Delta_{\mathbb{D}^{n+1}_+}H_f=0\text{ on }\mathbb{D}^{n+1}_+;\\
 H_f\equiv f\text{ on }\partial\mathbb{D}^{n+1}_+,
\end{cases}$$
could be represented as 
\begin{align}\label{har-ext}
H_f(p)=\int_{\partial\mathbb{D}^{n+1}_+}P(p,q)f(q)d\sigma_q\text{ for any }p\in\mathbb{D}^{n+1}_+.
\end{align}
For any $p_0\in\partial\mathbb{D}^{n+1}_+$,
\begin{align*}
   -\frac{\partial H_f(p_0)}{\partial\nu}&=\lim\limits_{p\to p_0}\frac{H_f(p)-f(p_0)}{\textnormal{dist}(p,p_0)}\\
   &=\lim\limits_{p\to p_0}\frac{1}{\textnormal{dist}(p,p_0)}\left(\int_{\partial\mathbb{D}^n_+}P(p,q)(f(q)-f(p_0))d\sigma_q\right)\\
   &+\lim\limits_{p\to p_0}\frac{f(p_0)}{\textnormal{dist}(p,p_0)}\left(\int_{\partial\mathbb{D}^n_+}P(p,q)d\sigma_q-1\right)\\
   &=I+II,
\end{align*}
where $p$ tends to $p_0$ along the straight line which is vertical to $\partial\mathbb{D}^{n+1}_+$.
We firstly consider the first term. For any $q\in\partial\mathbb{D}^{n+1}_+$, there exists an element $\tau\in\textnormal{Isom}(\mathbb{D}^{n+1})$ such that
 (see Figure \ref{fig:05})
\begin{enumerate}
    \item $\tau(q)$=0;
    \item $\tau(\mathbb{D}^{n+1}_+)=\mathbb{D}^{n+1}_+$ and $\tau(\partial\mathbb{D}^{n+1}_+)=\partial\mathbb{D}^{n+1}_+$.
\end{enumerate}
\begin{figure}[ht]
\centering
\includegraphics[width=0.8\textwidth]{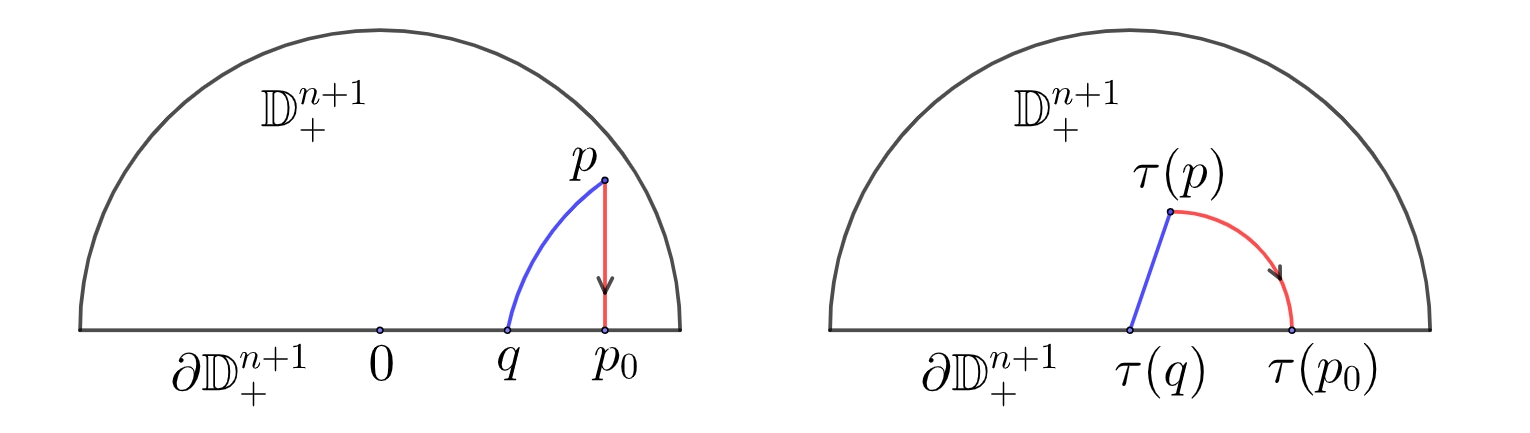}
\caption{}
\label{fig:05}
\end{figure}
Then $\tau(p)$ tends to $\tau(p_0)$ along a curve which is vertical to $\partial\mathbb{D}^{n+1}_+$.  It follows that
\begin{align}\label{e-I-1}
    \textnormal{dist}(\tau(p),\tau(p_0))\sim\frac{2y(\tau(p))}{1-|\tau(p)|^2}\text{ as }p\to p_0.
\end{align}
On the other hand, we have
\begin{align}\label{e_II-1}
   P(p_0,q)&=P(\tau(p_0),\tau(q)) \\
            &=-\frac{2}{\textnormal{Vol}(S^n)}\cdot\frac{1}{\sinh^{n}\textnormal{dist}(p_0,q)}\cdot\frac{\partial\ \textnormal{dist}(\tau(p_0),\tau(r))}{\partial\nu}|_{r=q}\nonumber\\
            &=-\frac{2}{\textnormal{Vol}(S^n)}\cdot\frac{1}{\sinh^{n}\textnormal{dist}(p_0,q)}\cdot\langle\nabla \textnormal{dist}(\tau(p_0),\tau(q)),\nu\rangle_{ds^2}\nonumber\\
            &=\frac{2}{\textnormal{Vol}(S^n)}\cdot\frac{1}{\sinh^{n}\textnormal{dist}(p_0,q)}\cdot\frac{y(\tau(p))}{|\tau(p)|}.\nonumber
\end{align}
From \eqref{e-I-1} and \eqref{e_II-1}, we have for any $q\in\partial\mathbb{D}^{n+1}_+$, as $p\to p_0$,
\begin{align*}
    \frac{P(p,q)}{\textnormal{dist}(p,q_0)}&=\frac{P(\tau(p),\tau(q))}{\textnormal{dist}(\tau(p),\tau(q_0))}\\
    &\sim \frac{1-|\tau(p)|^2}{2y(\tau(p))}\times \frac{2}{\textnormal{Vol}(S^n)}\cdot\frac{1}{\sinh^{n}\textnormal{dist}(p_0,q)}\cdot\frac{y(\tau(p))}{|\tau(p)|}\\
    &\sim\frac{1-|\tau(p_0)|^2}{|\tau(p_0)|}\cdot\frac{1}{\textnormal{Vol}(S^n)}\cdot\frac{1}{\sinh^{n}\textnormal{dist}(p_0,q)}\\
    &= \frac{1}{\textnormal{Vol}(S^n)}\cdot\frac{1}{\sinh^{n+1}\textnormal{dist}(p_0,q)},
\end{align*}
the last equality holds since $|\tau(p)|=\tanh\frac{\textnormal{dist}(\tau(p),0)}{2}$. Hence
\begin{align}\label{e-I}
    I=\frac{2}{\textnormal{Vol}(S^{n})}P.V.\int_{\partial\mathbb{D}^n_+}\frac{f(q)-f(p_0)}{\sinh^{n+1}\textnormal{dist}(p_0,q)}d\sigma_{q}.
\end{align}
For the second term, according to the conditions $(a),\ (b)$ in Page 21 and \eqref{har-ext}, we have
\begin{align*}
    \int_{\partial\mathbb{D}^{n+1}_+}P(p,q)d\sigma_q&=\varphi_n(p)\\
    &=1-c_n\int_0^r\frac{1}{\cosh^n s}ds
\end{align*}
where $r=\textnormal{dist}(p,\partial\mathbb{D}^{n+1}_+)$. Hence
\begin{align}\label{e-II}
II&=\lim\limits_{p\to p_0}f(p_0)\cdot\frac{\varphi_n(p)-1}{\textnormal{dist}(p,p_0)}\\
&=\lim\limits_{p\to p_0}-c_nf(p_0)\frac{\textnormal{dist}(p,\partial\mathbb{D}^{n+1}_+)}{\textnormal{dist}(p,p_0)}=-c_nf(p_0).\nonumber
\end{align}
Combine with \eqref{e-I} and \eqref{e-II}, we have the following theorem.
\begin{theorem}
    The DtN operator $\mathcal{D}_{\mathbb{D}_+^{n+1}}$ on $\mathbb{D}^{n+1}_+\ (n\geq 1)$ could be represented as follows: for any
    $f\in C_c^\infty(\partial\mathbb{D}^{n+1}_+)$ and $p_0\in\partial\mathbb{D}^{n+1}_+$,
\begin{align*}
    \mathcal{D}_{\mathbb{D}_+^{n+1}}(f)(p_0)=c_nf(p_0)-\frac{2}{\textnormal{Vol}(S^{n})}P.V.\int_{\partial\mathbb{D}^{n+1}_+}\frac{f(q)-f(p_0)}{\sinh^{n+1}\textnormal{dist}(p_0,q)}d\sigma_{q},
\end{align*}
where $\textnormal{Vol}(S^n)$ is the Euclidean volume of $n-$dimension sphere and 
$$c_n=\left(\int_0^\infty\frac{1}{\cosh^n s}ds\right)^{-1}.$$
\end{theorem}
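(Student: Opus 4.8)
The plan is to compute $\mathcal{D}_{\mathbb{D}_+^{n+1}}(f)(p_0)$, which on $C_c^\infty(\partial\mathbb{D}^{n+1}_+)$ equals $\frac{\partial H_f}{\partial\nu}(p_0)$, directly from the Poisson-type representation \eqref{har-ext}. The starting point is that $\partial\mathbb{D}^{n+1}_+$ is a totally geodesic hypersurface of $\mathbb{D}^{n+1}$, so reflection across it is an isometry and $G^+(p,q)=G(p,q)-G(p,q^\prime)$ is genuinely the Dirichlet Green function of $\mathbb{D}^{n+1}_+$; its normal derivative in the $q$-variable at a boundary point produces the Poisson kernel $P(\cdot,q)$, and $H_f=\int_{\partial\mathbb{D}^{n+1}_+}P(\cdot,q)f(q)\,d\sigma_q$ is the harmonic extension of $f$, which agrees with the extension built earlier for non-compact manifolds by uniqueness of the bounded harmonic extension and whose normal derivative along the boundary we already know exists. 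Letting $p\to p_0$ along the inward geodesic normal, I would use the elementary identity
$$H_f(p)-f(p_0)=\int_{\partial\mathbb{D}^{n+1}_+}P(p,q)\bigl(f(q)-f(p_0)\bigr)\,d\sigma_q+f(p_0)\left(\int_{\partial\mathbb{D}^{n+1}_+}P(p,q)\,d\sigma_q-1\right),$$
divide by $\textnormal{dist}(p,p_0)$ and pass to the limit, so that $-\frac{\partial H_f}{\partial\nu}(p_0)=I+II$ splits into a singular principal-value term $I$ and a ``mass defect'' term $II$.

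For the term $I$ the crucial point is the boundary behaviour of $P(p,q)/\textnormal{dist}(p,p_0)$ for a fixed $q\neq p_0$: choosing $\tau\in\textnormal{Isom}(\mathbb{D}^{n+1})$ with $\tau(\mathbb{D}^{n+1}_+)=\mathbb{D}^{n+1}_+$ and $\tau(q)=0$ makes the Green function and its normal derivative radial and explicit, and then combining the distance expansion \eqref{e-I-1} with the kernel identity \eqref{e_II-1} and undoing $\tau$ yields $P(p,q)/\textnormal{dist}(p,p_0)\to \frac{2}{\textnormal{Vol}(S^n)}\sinh^{-(n+1)}\textnormal{dist}(p_0,q)$; integrating against $f(q)-f(p_0)$ gives \eqref{e-I}. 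For the term $II$ I would observe that $q\mapsto P(p,q)$ integrates over $\partial\mathbb{D}^{n+1}_+$ to the harmonic extension of the constant $1$, which by uniqueness is exactly the radial function $\varphi_n$ constructed above, so $\int_{\partial\mathbb{D}^{n+1}_+}P(p,q)\,d\sigma_q-1=\varphi_n(p)-1=-c_n\int_0^{r}\cosh^{-n}s\,ds$ with $r=\textnormal{dist}(p,\partial\mathbb{D}^{n+1}_+)=\textnormal{dist}(p,p_0)$ along the normal; dividing by $\textnormal{dist}(p,p_0)=r$ and using $\int_0^{r}\cosh^{-n}s\,ds\sim r$ as $r\to 0$ gives $II=-c_nf(p_0)$, that is \eqref{e-II}. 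Adding $I$ and $II$ and recalling $\mathcal{D}_{\mathbb{D}^{n+1}_+}(f)(p_0)=\frac{\partial H_f}{\partial\nu}(p_0)=-(I+II)$ yields the asserted formula, with $c_n=\bigl(\int_0^\infty\cosh^{-n}s\,ds\bigr)^{-1}$.

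The main obstacle is the rigorous justification of the limit defining $I$: the kernel $\sinh^{-(n+1)}\textnormal{dist}(p_0,q)$ is not integrable near $q=p_0$ --- it behaves like $\textnormal{dist}(p_0,q)^{-(n+1)}$ on the $n$-dimensional boundary --- so the exchange of limit and integral is not governed by dominated convergence and one must show that the principal value actually exists. I would split $\partial\mathbb{D}^{n+1}_+$ into the region $\textnormal{dist}(p_0,q)\ge\delta$, where the asymptotics of $P(p,q)/\textnormal{dist}(p,p_0)$ hold uniformly and the limit can be taken under the integral, and a small boundary neighbourhood of $p_0$; on the latter one uses the first-order Taylor expansion $f(q)-f(p_0)=O(\textnormal{dist}(p_0,q))$ together with the fact that the leading part of the kernel is even in $q-p_0$, so that its pairing with the linear part of $f-f(p_0)$ nearly cancels and the remaining contribution is $O(\delta)$ uniformly in $p$; this both makes the principal value well defined and legitimizes interchanging the two limits. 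Once this is in place, everything else reduces to the explicit hyperbolic identities \eqref{e-I-1}, \eqref{e_II-1}, \eqref{e-I} and \eqref{e-II} already recorded before the statement.
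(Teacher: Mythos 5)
Your proposal follows essentially the same route as the paper's derivation: the same splitting of $-\frac{\partial H_f}{\partial\nu}(p_0)$ into the singular term $I$ and the mass-defect term $II$, the same use of an isometry $\tau$ fixing $\mathbb{D}^{n+1}_+$ to extract the kernel asymptotics \eqref{e-I-1}--\eqref{e_II-1}, and the same identification $\int_{\partial\mathbb{D}^{n+1}_+}P(p,q)\,d\sigma_q=\varphi_n(p)$ leading to \eqref{e-I} and \eqref{e-II}. Your added discussion justifying the principal value and the limit--integral interchange only makes explicit a point the paper leaves implicit, so the argument is correct and matches the paper's proof.
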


\noindent In particular, for the case of $n+1=2$, for any $f\in C^\infty_c(\partial\mathbb{D}^2_+)$,
$$\mathcal{D}_{\mathbb{D}_+^{2}}f(t)=c_1f(t)-\frac{1}{\pi}P.V.\int_{-1}^1\frac{f(t)-f(s)}{\sinh^{2}\textnormal{dist}(t,s)}\cdot\frac{2}{1-s^2}ds.$$
Set 
$$s=\frac{e^x-1}{e^x+1},\ t=\frac{e^y-1}{e^y+1}\text{ and }g(u)=f\left(\frac{e^u-1}{e^u+1}\right),$$
then we have $g\in C_c^\infty((-\infty,\infty))$ and 
\begin{align*}
\left(\mathcal{D}_{\mathbb{D}^2_+}f,f\right)_{L^2(\partial\mathbb{D}^2_+)}&=c_1||f||^2_{L^2(\partial\mathbb{D}^2_+)}\\
&-\frac{1}{\pi}P.V.\int_{-1}^{1}\int_{-1}^1 \frac{(f(t)-f(s))f(s)}{\sinh^{2}\textnormal{dist}(t,s)}\cdot\frac{2}{1-s^2}\cdot\frac{2}{1-t^2}dsdt\\
&=c_1||f||^2_{L^2(\partial\mathbb{D}^2_+)}-\frac{1}{\pi}P.V.\int_{-\infty}^{\infty}\int_{-\infty}^\infty \frac{g(x)-g(y)}{\sinh^2(x-y)}g(y)dxdy\\ 
    &=c_1||f||^2_{L^2(\partial\mathbb{D}^2_+)}+\frac{1}{2\pi}\int_{-\infty}^{\infty}\int_{-\infty}^\infty \frac{(g(x)-g(y))^2}{\sinh^2(x-y)}dxdy\\
    &\leq c_1||f||^2_{L^2(\partial\mathbb{D}^2_+)}+\frac{1}{2\pi}\int_{-\infty}^{\infty}\int_{-\infty}^\infty \frac{(g(x)-g(y))^2}{|x-y|^2}dxdy\\
    &=c_1||f||^2_{L^2(\partial\mathbb{D}^2_+)}+C\cdot(\sqrt{-\Delta_{\mathbb{R}}}g,g)_{L^2(\mathbb{R})}
\end{align*}
   for some universal constant $C>0$. Since $$\inf \textnormal{Spec}\sqrt{-\Delta_{\mathbb{R}}}=0,$$ it follows that for any $\epsilon>0$, there exists $f\in C^\infty_c((-1,1))$ such that 
\begin{align*}
   (\sqrt{-\Delta_{\mathbb{R}}}g,g)_{L^2(\mathbb{R})}&\leq\epsilon\cdot \int_{-\infty}^{\infty}g(x)^2dx\\
    &=\epsilon\cdot \int_{-1}^1 \frac{2}{1-x^2}f(x)^2dx=\epsilon\cdot||f||^2_{L^2(\partial\mathbb{D}^2_+)}.
\end{align*}
Hence
$$\left(\mathcal{D}_{\mathbb{D}^2_+}f,f\right)_{L^2(\partial\mathbb{D}^2_+)}\leq\left(c_1+\epsilon\right)||f||^2_{L^2(\partial\mathbb{D}^2_+)}$$
for any $\epsilon>0$, which implies that
$$\inf\textnormal{Spec}\left(\mathcal{D}_{\mathbb{D}^2_+}\right)\leq c_1.$$
It follows that
\begin{align*}
\inf\textnormal{Spec}\left(\mathcal{D}_{\mathbb{D}^2_+}\right)&=c_1\\
&=\left(\int_0^\infty \frac{1}{\cosh t}dt\right)^{-1}\\
&=\left(\arctan \sinh t|_0^\infty\right)^{-1}=\frac{2}{\pi}.
\end{align*}
In summary, we have the following theorem.
\begin{theorem}
    For $n\geq 3$,
    $$\inf\textnormal{Spec}\left(\mathcal{D}_{\mathbb{D}^n_+}\right)\geq \left(\int_0^\infty\frac{1}{\cosh^{n-1} s}ds\right)^{-1},$$
    and for the case that $n=2$,
    $$\inf\textnormal{Spec}\left(\mathcal{D}_{\mathbb{D}^2_+}\right)=\frac{2}{\pi}.$$
\end{theorem}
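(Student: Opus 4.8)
The plan is to assemble the theorem from the two ingredients developed above: a universal lower bound valid in every dimension, together with a matching upper bound that is available only when $n=2$. For the lower bound I would identify $\mathbb{D}^n_+$ with $\mathbb{H}^n_R$ and use on the latter the explicit function $\varphi_{n-1}(p)=1-c_{n-1}\int_0^{r}\frac{ds}{\cosh^{n-1}s}$, where $r=\textnormal{dist}(p,\partial\mathbb{H}^n_R)$ and $c_{n-1}=\left(\int_0^\infty\frac{ds}{\cosh^{n-1}s}\right)^{-1}$. Using the coordinate expressions of the hyperbolic metric and of $\Delta_{\mathbb{H}^n_R}$ in the distance-to-boundary chart $(u_1,\dots,u_{n-1},r)$, one checks that $\varphi_{n-1}$ is positive, harmonic on $\mathbb{H}^n_R$, and satisfies $\frac{\partial\varphi_{n-1}}{\partial\nu}=c_{n-1}\varphi_{n-1}$ along the boundary. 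Lemma~\ref{l-ste} then immediately yields $\inf\textnormal{Spec}(\mathcal{D}_{\mathbb{D}^n_+})=\inf\textnormal{Spec}(\mathcal{D}_{\mathbb{H}^n_R})\ge c_{n-1}$, which is exactly the claimed bound for all $n\ge 2$, in particular for $n\ge 3$.

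For $n=2$ I would obtain the reverse inequality from the explicit singular-integral form of $\mathcal{D}_{\mathbb{D}^2_+}$ established above. After the substitution $s=\frac{e^x-1}{e^x+1}$, $t=\frac{e^y-1}{e^y+1}$, $g(u)=f\!\left(\frac{e^u-1}{e^u+1}\right)$, the associated quadratic form becomes $\left(\mathcal{D}_{\mathbb{D}^2_+}f,f\right)_{L^2(\partial\mathbb{D}^2_+)}=c_1\|f\|^2_{L^2(\partial\mathbb{D}^2_+)}+\frac{1}{2\pi}\int_{-\infty}^{\infty}\int_{-\infty}^{\infty}\frac{(g(x)-g(y))^2}{\sinh^2(x-y)}\,dx\,dy$. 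Bounding the kernel by $\sinh^{-2}(x-y)\le |x-y|^{-2}$ and recognising the resulting double integral as a fixed multiple of $(\sqrt{-\Delta_{\mathbb{R}}}\,g,g)_{L^2(\mathbb{R})}$, I get $\left(\mathcal{D}_{\mathbb{D}^2_+}f,f\right)\le c_1\|f\|^2_{L^2(\partial\mathbb{D}^2_+)}+C\,(\sqrt{-\Delta_{\mathbb{R}}}\,g,g)_{L^2(\mathbb{R})}$ for a universal $C>0$. Since $\inf\textnormal{Spec}\sqrt{-\Delta_{\mathbb{R}}}=0$, for each $\varepsilon>0$ one can choose $g\in C_c^\infty(\mathbb{R})$ (equivalently $f\in C_c^\infty((-1,1))$) with $(\sqrt{-\Delta_{\mathbb{R}}}\,g,g)_{L^2(\mathbb{R})}\le\varepsilon\|g\|^2_{L^2(\mathbb{R})}=\varepsilon\|f\|^2_{L^2(\partial\mathbb{D}^2_+)}$, so $\left(\mathcal{D}_{\mathbb{D}^2_+}f,f\right)\le (c_1+C\varepsilon)\|f\|^2_{L^2(\partial\mathbb{D}^2_+)}$; letting $\varepsilon\to 0$ gives $\inf\textnormal{Spec}(\mathcal{D}_{\mathbb{D}^2_+})\le c_1$. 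Combined with the lower bound and the evaluation $c_1=\left(\int_0^\infty\frac{dt}{\cosh t}\right)^{-1}=\left(\arctan\sinh t\big|_0^\infty\right)^{-1}=\frac{2}{\pi}$, this forces $\inf\textnormal{Spec}(\mathcal{D}_{\mathbb{D}^2_+})=\frac{2}{\pi}$.

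The metric and Laplacian computations in the distance-to-boundary coordinates and the change-of-variables bookkeeping in the $n=2$ case are routine. The genuinely delicate step—on which the whole dimension-two argument rests—is the derivation of the explicit principal-value form of $\mathcal{D}_{\mathbb{D}^2_+}$: one must justify interchanging the limit $p\to p_0$ along the geodesic normal to the boundary with the principal-value integral, control the Poisson-kernel asymptotics $\frac{P(p,q)}{\textnormal{dist}(p,q_0)}\sim\frac{1}{\textnormal{Vol}(S^n)}\cdot\frac{1}{\sinh^{n+1}\textnormal{dist}(p_0,q)}$ uniformly enough to pass to the limit, and isolate the boundary term $\int_{\partial\mathbb{D}^{n+1}_+}P(p,q)\,d\sigma_q-1$ using property (b) of $\varphi_n$. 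Once that representation is available, the comparison with the half-Laplacian together with $\inf\textnormal{Spec}\sqrt{-\Delta_{\mathbb{R}}}=0$ closes the argument with no further difficulty.
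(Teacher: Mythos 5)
Your proposal is correct and follows essentially the same route as the paper: the lower bound comes from Lemma \ref{l-ste} applied to the explicit harmonic function $1-c_{n-1}\int_0^r\cosh^{-(n-1)}s\,ds$ in the distance-to-boundary coordinates on $\mathbb{H}^n_R$, and for $n=2$ the matching upper bound comes from the principal-value representation of $\mathcal{D}_{\mathbb{D}^2_+}$, the kernel comparison $\sinh^{-2}(x-y)\le|x-y|^{-2}$ with the half-Laplacian, and $\inf\textnormal{Spec}\sqrt{-\Delta_{\mathbb{R}}}=0$. Your identification of the delicate step (justifying the limit $p\to p_0$ against the principal-value integral and the Poisson-kernel asymptotics) is also where the paper invests its effort in the preceding computation.
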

\noindent It is natural to ask the following question
\begin{que*}
    What is the exact value of $\inf\textnormal{Spec}\left(\mathcal{D}_{\mathbb{D}^n_+}\right)$ for $n\geq 3$?
\end{que*}

\noindent{\bf Acknowledgments.}
 The first author is supported by NSFC, No. 12371056 and Shanghai Science and Technology Program. No. 22JC1400100.

\bibliographystyle{plain}
\bibliography{ref}
 
\noindent {Bobo Hua\\
School of Mathematical Sciences, LMNS,\\
Fudan University, Shanghai 200433, P.R. China\\
Shanghai Center for Mathematical Sciences, Jiangwan Campus, \\
Fudan University, No. 2005 Songhu Road, Shanghai 200438, P.R. China
\\
e-mail: bobohua@fudan.edu.cn }
\medskip
\\
\noindent {Yang Shen\\
School of Mathematical Sciences, \\
Fudan University, Shanghai, 200433, P.R. China\\
e-mail: shenwang@fudan.edu.cn }
\end{document}